\providecommand*{\A}{\mathbb{A}}
\providecommand*{\N}{\mathbb{N}}
\providecommand*{\Z}{\mathbb{Z}}
\providecommand*{\Ab}{\mathbf{Ab}}
\providecommand*{\cA}{\mathcal{A}}
\providecommand*{\cB}{\mathcal{B}}
\providecommand*{\cE}{\mathcal{E}}
\providecommand*{\cF}{\mathcal{F}}
\providecommand*{\cS}{\mathcal{S}}
\providecommand*{\cT}{\mathcal{T}}
\providecommand*{\cU}{\mathcal{U}}
\providecommand*{\cV}{\mathcal{V}}
\providecommand*{\add}{\mathrm{add}}
\providecommand*{\proj}{\mathrm{proj}}
\providecommand*{\ind}{\mathrm{ind}}
\providecommand*{\tol}{\mathrm{torsl}}
\providecommand*{\divbl}{\mathrm{divbl}}
\providecommand*{\fun}{\mathrm{fun}}
\providecommand*{\Fun}{\mathrm{Fun}}
\providecommand*{\fdmod}{\mathrm{mod}}
\providecommand*{\stmod}{\underline{\mathrm{mod}}}
\newcommand*{\stsubm}[2]{\underline{S_{#1}}(#2)}
\providecommand*{\aus}{\mathrm{Aus}}
\providecommand*{\Hom}{\mathrm{Hom}}
\providecommand*{\ext}{\mathrm{Ext}}
\providecommand*{\soc}{\mathrm{soc}}
\providecommand*{\Top}{\mathrm{top}}
\providecommand*{\End}{\mathrm{End}}
\providecommand*{\gen}{\mathrm{gen}}
\providecommand*{\cog}{\mathrm{cogen}}
\providecommand*{\op}{\mathrm{op}}
\providecommand*{\id}{\mathrm{id}}
\providecommand*{\im}{\mathrm{im}}
\providecommand*{\cok}{\mathrm{coker}}
\providecommand*{\stGamma}{\underline{\Gamma}}
\providecommand*{\ob}{\mathrm{Ob}}
\providecommand*{\bil}{\text{-}}
\newtheorem{thm}{Theorem}[section]
\newtheorem{lemma}[thm]{Lemma}
\newtheorem{prp}[thm]{Proposition}
\newtheorem{cor}[thm]{Corollary}
\theoremstyle{remark}
\newtheorem{remark}[thm]{Remark}
\newtheorem*{rem}{Remark}
\theoremstyle{definition}
\newtheorem{defn}[thm]{Definition}
\begin{document}


\author{Ögmundur Eiríksson}
\date{}
\title{From Submodule Categories to the Stable Auslander Algebra}
\maketitle

\begin{abstract}
We construct two functors from the submodule category of a self-injective representation-finite algebra $\Lambda$ to the module category of the stable Auslander algebra of $\Lambda$. 
These functors factor through the module category of the Auslander algebra of $\Lambda$.
Moreover they induce equivalences from the quotient categories of the submodule category modulo their respective kernels and said kernels have finitely many indecomposable objects up to isomorphism.
Their construction uses a recollement of the module category of the Auslander algebra induced by an idempotent and this recollement determines a characteristic tilting and cotilting module.
If $\Lambda$ is taken to be a Nakayama algebra, then said tilting and cotilting module is a characteristic tilting module of a quasi-hereditary structure on the Auslander algebra. We prove that the self-injective Nakayama algebras are the only algebras with this property.
\end{abstract}


\section{Introduction}
\label{sec:intro}
Fix a field $k$, all algebras considered will be algebras over $k$ and all modules will be finite-dimensional left modules unless stated otherwise.
For an algebra $A$ we let $A \bil \fdmod$ denote the category of finite-dimensional left $A$-modules.
As a shorthand notation we often write $(M,N)_{\cA} \coloneqq \Hom_{\cA}(M,N)$ for the homomorphism spaces in an additive category $\cA$. 
Similarly we write $(M,N)_{A} \coloneqq \Hom_{A}(M,N)$ for homomorphisms in $A \bil \fdmod$ for an algebra $A$.

For an additive category $\cA$ let $\ind(\cA)$ denote the class of indecomposable objects in $\cA$ and we write $\ind(A) \coloneqq \ind(A \bil \fdmod)$.
An \emph{additive subcategory} of $\cA$ is a full subcategory closed under finite direct sums and direct summands.
For an object $A \in \cA$ let $\add(A)$ denote the smallest additive subcategory of $\cA$ containing $A$.
Denote the algebra of upper triangular $2 \times 2$ matrices with coefficients in $A$ by $T_2(A)$.

The \emph{category of morphisms} in $A \bil \fdmod$ is the category which has maps\\ $(M_1 \overset{f_M}{\rightarrow} M_0)$ of $A$-modules as objects and a morphism of two objects\\ $(M_1 \overset{f_M}{\rightarrow} M_0)$ and $(N_1 \overset{f_N}{\rightarrow} N_0)$ is a pair $(g_1,g_0) \in (M_1,N_1)_{A} \times (M_0,N_0)_{A}$ such that $f_N g_1 = g_0 f_M$. 
We will identify $T_2(A) \bil \fdmod$ with the category of morphisms in $A \bil \fdmod$.
The \emph{submodule category} of $A$, denoted $\cS(A)$, is the full subcategory of monomorphisms in $T_2(A) \bil \fdmod$.
We denote the full subcategory of epimorphisms by $\cE(A)$.

Studies of submodule categories go back to Birkhoff \cite{Birkhoff1935}.
Recently they have been a subject to active research,
including work of Simson about their tame-wild dichotomy \cite{simson2002,simson2007,simson2015}.
Also Ringel and Schmidmeier have studied their Auslander-Reiten theory \cite{RS2006}, as well as some particular cases of wild type \cite{RS2008}.
Moreover they were studied with respect to Gorenstein-projective modules and tilting theory in \cite{LZ2017,zhang2011}.
The homological properties of submodule categories give extensive information on quiver Grassmannians, in particular their isomorphism classes correspond to strata in certain stratifications \cite{CFR2012}.
If $\Lambda$ is self-injective, the monomorphism category is a Frobenius category, and Chen has shown its stable category is equivalent to the singularity category of $T_2(\Lambda)$ \cite{Chen2011}. 
In \cite{KLM2013} Kussin, Lenzing and Meltzer give a connection of monomorphism categories to weighted projective lines, which again connects them to singularity categories \cite{KLM2013-2}.

We define the \emph{kernel} of an additive functor $F \colon \cA \rightarrow \cB$ as the full subcategory of all objects $A$ in $\cA$ such that $F(A) \cong 0$.
Given an additive category $\cA$ and a full subcategory $\cB$ of $\cA$, the \emph{quotient category} $\cA/\cB$ has objects $\ob(\cA/\cB) = \ob (\cA)$. 
For $X,Y \in \ob(\cA)$, let $R_{\cB}(X,Y)$ denote the morphisms of $\Hom_{\cA}(X,Y)$ that factor through an object in $\cB$. 
The morphisms spaces of $\cA/\cB$ are defined as the quotients $\Hom_{\cA/\cB}(X,Y) \coloneqq \Hom_{\cA}(X,Y)/R_{\cB}(X,Y)$. 

Fix a basic algebra $\Lambda$ of finite representation type. 
Let $E$ be the additive generator of $\Lambda \bil \fdmod$, i.e. the basic $\Lambda$-module such that $\add(E) = \Lambda \bil \fdmod$.
The \emph{Auslander algebra} of $\Lambda$ is $\aus(\Lambda) \coloneqq \mathrm{End}_{\Lambda}(E)^{\op}$.
Write $\Gamma \coloneqq \aus(\Lambda)$ and let $e \in \Gamma$ be the idempotent given by the projection onto the summand $\Lambda$ of $E$.
Write $\Gamma e \Gamma$ for the two sided ideal generated by $e$, the \emph{stable Auslander algebra} is defined as the algebra $\underline{\Gamma} \coloneqq \Gamma/ \Gamma e \Gamma$.

Write $\alpha \coloneqq \cok (E,-)_{\Lambda} \colon T_2({\Lambda}) \bil \fdmod \rightarrow \Gamma \bil \fdmod$,
and let $\epsilon \colon \cS(\Lambda) \to T_2(\Lambda)$ be the quotient functor $\epsilon(M_1 \to M_0) \coloneqq (M_0 \to M_0/M_1)$.
We define the functors $F \coloneqq \stGamma \otimes_{\Gamma} \alpha(-)$ and $G \coloneqq \stGamma \otimes_{\Gamma} \alpha(\epsilon(-))$ from $\cS(\Lambda)$ to $\stGamma \bil \fdmod$.
In \cite{RZ2014} Ringel and Zhang studied those functors in the case $\Lambda = k[x]/\langle x^N \rangle$, and observed that then $\stGamma$ is isomorphic to the preprojective algebra $\Pi_{N-1}$ of type $A_{N-1}$.
They show that each of the functors induces an equivalence of categories $\cS(k[x]/\langle x^N \rangle)/ \chi \rightarrow \Pi_{N-1} \bil \fdmod$, where $\chi$ is the kernel of the respective functor $F$ or $G$ \cite[Theorem 1]{RZ2014}.
Moreover they could also describe those kernels explicitly.
We will prove the following generalization of that statement.

\begin{restatable}{theorem}{induced}
\label{thm:induced}
Let $\Lambda$ be a basic, self-injective and representation finite algebra.
Let $\cU$ denote the smallest additive subcategory of $\cS(\Lambda)$ containing $(E \overset{\id}{\rightarrow} E)$ and all objects of the form $(M \overset{f}{\rightarrow} I)$, where $I$ is a projective-injective $\Lambda$-module.
Also define
$\cV \coloneqq \add(( E \overset{\id}{\rightarrow} E) \oplus (0 \rightarrow E) )$.
Let $m$ be the number of isomorphism classes of $\ind(\Lambda)$.
Then $\cU$ and $\cV$ have $2m$ indecomposable objects up to isomorphism and the following holds.
\begin{enumerate}[$(i)$]
\item
The functor $F$ induces an equivalence of categories $\cS(\Lambda)/\cU \rightarrow \stGamma \bil \fdmod$.
\item
The functor $G$ induces an equivalence of categories $\cS(\Lambda)/\cV \rightarrow \stGamma \bil \fdmod$.
\end{enumerate}
\end{restatable}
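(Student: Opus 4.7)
My plan is to factor $F = i^* \circ \alpha$, where $i^* \coloneqq \stGamma \otimes_\Gamma -$ is the localization functor from the recollement of $\Gamma\bil\fdmod$ induced by the idempotent $e$, and to analyze each factor separately. The factorization $G = F \circ \epsilon$ reduces claim $(ii)$ to a parallel argument about the image of $\epsilon$. The decisive calculation is $\stGamma \cdot e = 0$: if $M$ lies in $\add(\Gamma e)$ or is a quotient thereof, then $i^*(M) = 0$. In particular $i^*(E,P)_\Lambda = 0$ for every projective-injective $P$, since $(E,P)_\Lambda \in \add(\Gamma e)$.

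First I would verify the vanishing $F(\cU) = 0$: $\alpha(E \xrightarrow{\id} E) = 0$ directly, and for a mono $(M \hookrightarrow I)$ with $I$ projective-injective the cokernel $\alpha(X)$ is a quotient of $(E,I)_\Lambda$, which $i^*$ annihilates. The analogous $G(\cV)=0$ follows because $\epsilon$ sends $(E\xrightarrow{\id}E) \mapsto (E \to 0)$ and $(0 \to E) \mapsto (E \xrightarrow{\id} E)$, both having $\alpha = 0$. Next, for density of $F$: given $V \in \stGamma\bil\fdmod$ regarded as a $\Gamma$-module, a minimal projective presentation $(E,N_1)_\Lambda \xrightarrow{g_*} (E,N_0)_\Lambda \to V \to 0$ corresponds by Yoneda to $g\colon N_1 \to N_0$; setting $X \coloneqq (N_1 \xrightarrow{(g,\iota)} N_0 \oplus I(N_1))$ with $\iota\colon N_1 \hookrightarrow I(N_1)$ the injective envelope gives $X \in \cS(\Lambda)$ and a surjection $\alpha(X) \twoheadrightarrow V$ whose kernel is a quotient of $(E,I(N_1))_\Lambda$; applying the right-exact $i^*$ yields $F(X) \cong V$. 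Density of $G$ follows by the dual construction with a projective cover in place of the injective envelope.

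The heart of the argument is proving that the induced $\bar F\colon \cS(\Lambda)/\cU \to \stGamma\bil\fdmod$ is full and faithful. I would invoke the classical fact that $\alpha$ induces an equivalence $T_2(\Lambda)\bil\fdmod / \ker \alpha \simeq \Gamma\bil\fdmod$, with $\ker \alpha$ consisting of split epis --- which inside $\cS(\Lambda)$ reduces to summands of $(E\xrightarrow{\id} E) \in \cU$. After this reduction the problem becomes: if $\alpha(\phi)\colon \alpha(X) \to \alpha(Y)$ has image inside $\Gamma e \Gamma \cdot \alpha(Y)$, then $\phi$ factors through $\cU$. Concretely, $\Gamma e \Gamma \alpha(Y)$ is generated by the image of a map $(E,I)_\Lambda \to \alpha(Y)$ with $I$ projective-injective; lifting via Yoneda and combining with the injective-envelope construction from the density argument produces a factorization of $\phi$ through an object $(M\hookrightarrow I)\in\cU$. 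The parallel statement for $\bar G$ is handled by the same method applied to the image of $\epsilon$. The main obstacle is translating the module-level condition ``image in $\Gamma e \Gamma \alpha(Y)$'' into an explicit morphism factorization in $\cS(\Lambda)$.

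Finally, the indecomposable count: $\cV$ has $2m$ summands from the $m$-fold decompositions of $(E\xrightarrow{\id}E)$ and $(0\to E)$. For $\cU$, any mono $(M\hookrightarrow I)$ with $I$ projective-injective splits as $(M\hookrightarrow I(M)) \oplus (0\to I')$ using the decomposition $I = I(M) \oplus I'$ guaranteed by the injectivity of $I(M)$; the indecomposable such objects are thus the $m$ inclusions $(N\hookrightarrow I(N))$ for indecomposable $N$ (of which the $n$ with $N$ projective coincide with identity summands of $(E\xrightarrow{\id}E)$), plus the $n$ zero maps $(0\to I_j)$ for indecomposable projective-injective $I_j$, together with the $m-n$ non-projective identities $(E_i \xrightarrow{\id} E_i)$, totalling $n + (m-n) + n + (m-n) = 2m$.
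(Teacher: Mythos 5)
Your overall architecture matches the paper's ($F=q\circ\alpha\eta$ with $q=\stGamma\otimes_\Gamma -$, reduction of $(ii)$ via $\epsilon$, the same identification of $\ker F$ with $\cU$, and essentially the same indecomposable count), and the parts you carry out in full --- the vanishing $F(\cU)=0=G(\cV)$, density of both functors, and the count $2m$ --- are correct. But the step you yourself call ``the heart of the argument'' has two real holes. First, \emph{fullness} of $\bar F$ is never argued. Lifting a map $F(X)\to F(Y)$ in $\stGamma\bil\fdmod$ to a $\Gamma$-map $\alpha(X)\to\alpha(Y)$ means lifting along the surjection $\alpha(Y)\twoheadrightarrow \alpha(Y)/\Gamma e\Gamma\,\alpha(Y)$, and the obstruction lives in $\ext^1_\Gamma\bigl(\alpha(X),\Gamma e\Gamma\,\alpha(Y)\bigr)$. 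The paper kills this group by showing (Lemma 3.3, via the recollement and the results of Crawley-Boevey--Sauter) that $\Gamma e\Gamma\,\alpha(Y)\in\gen(T)\cap\cog(T)=\add(T)$ for the tilting--cotilting module $T=c(E)$, while $\alpha(X)\in\Gamma\bil\tol=\cog(T)$, so the $\ext^1$ vanishes by rigidity of $T$. Nothing in your sketch supplies this vanishing; one can also obtain it bare-handed from the fact that $\Gamma e\Gamma\,\alpha(Y)$ is a quotient of $(E,P)_{\Lambda}$ with $P$ projective-injective and that $\alpha(X)$ is presented by a monomorphism, but some such argument must appear.

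Second, for faithfulness you correctly reduce to the claim that $\im\alpha(\phi)\subseteq\Gamma e\Gamma\,\alpha(Y)$ forces $\phi$ to factor through $\cU$, and then you declare the translation back to $\cS(\Lambda)$ ``the main obstacle'' without resolving it. The missing ingredient is again the identification of the trace: $\Gamma e\Gamma\,\alpha(Y)$ is a submodule of the torsionless module $\alpha(Y)$ and a quotient of a projective-injective, hence lies in $\add(T)$, and the modules in $\add(T)$ are exactly those of the form $\alpha(L\hookrightarrow I)$ with $I$ projective-injective, i.e.\ they are $\alpha\eta$ of objects of $\cU$. Once this is known, fullness of $\alpha$ on the full subcategory $\cS(\Lambda)$ lets you lift the factorization $\alpha(X)\to\Gamma e\Gamma\,\alpha(Y)\hookrightarrow\alpha(Y)$ to $\cS(\Lambda)$, with an error term killed by $\alpha$ and hence factoring through $\add(E\overset{\id}{\rightarrow}E)\subseteq\cU$. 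The paper packages both points into Proposition 4.1 (the restriction of $q$ to $\Gamma\bil\tol$ induces an equivalence $\Gamma\bil\tol/\add(T)\simeq\stGamma\bil\fdmod$) together with the fact that a composite of full, dense, objective functors is objective; without some version of that proposition your proof of $(i)$ does not close. Part $(ii)$ is genuinely easier --- the restriction of $q$ to $\ker(e)$ is already an equivalence by the recollement axioms, so no Ext-vanishing is needed --- and your sketch for it is essentially complete.
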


For now let us assume $\Lambda$ is self-injective.
The idempotent $e$ yields a recollement of $\Gamma \bil \fdmod$, and there is a tilting and cotilting module $T$ in $\Gamma \bil \fdmod$ given in terms of that recollement.
That recollement and the module $T$ feature in the construction of $F$ and $G$ in Section \ref{sec:from-to}.
Let $\pi \colon \stGamma \bil \fdmod \rightarrow \stGamma \bil \stmod$ be the projection to the stable category and let $\Omega$ denote the syzygy functor on $\stGamma \bil \stmod$.
We prove the following generalization of \cite[Theorem 2]{RZ2014}.

\begin{restatable}{theorem}{difference}
\label{thm:difference}
The functors $\pi  F$ and $\pi G$  differ by the syzygy functor on $\stGamma \bil \fdmod$, more precisely $\pi F = \Omega \pi G$.
\end{restatable}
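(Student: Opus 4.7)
The plan is to exhibit, for each $X = (M_1 \xrightarrow{f} M_0) \in \cS(\Lambda)$, a natural short exact sequence
\[ 0 \to \alpha(X) \to (E, M_0/M_1)_\Lambda \to \alpha(\epsilon(X)) \to 0 \]
in $\Gamma \bil \fdmod$ whose middle term is projective over $\Gamma$, and to show that applying $\stGamma \otimes_\Gamma -$ keeps it short exact; the middle term then becomes projective over $\stGamma$, directly realising $F(X)$ as a first syzygy of $G(X)$.

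To construct the sequence I would apply the left exact functor $(E, -)_\Lambda$ to the short exact sequence $0 \to M_1 \xrightarrow{f} M_0 \to M_0/M_1 \to 0$ determined by the monomorphism $f$. This produces an exact sequence $0 \to (E, M_1)_\Lambda \to (E, M_0)_\Lambda \to (E, M_0/M_1)_\Lambda$ whose first cokernel is $\alpha(X)$; accordingly $\alpha(X)$ is realised as the image of $(E, M_0)_\Lambda \to (E, M_0/M_1)_\Lambda$, and the cokernel of this inclusion is $\alpha(\epsilon(X))$ by definition. The whole construction is natural in $X$.

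The main obstacle is to show that $\stGamma \otimes_\Gamma -$, or equivalently $-/\Gamma e \Gamma \cdot -$, takes the above sequence to a short exact sequence in $\stGamma \bil \fdmod$. Since the middle term is projective, this is equivalent to the submodule identity
\[ \alpha(X) \cap \Gamma e \Gamma \cdot (E, M_0/M_1)_\Lambda = \Gamma e \Gamma \cdot \alpha(X) \]
inside $(E, M_0/M_1)_\Lambda$. I would identify the submodule $\alpha(X) \subseteq (E, M_0/M_1)_\Lambda$ with the morphisms $E \to M_0/M_1$ that lift along $M_0 \twoheadrightarrow M_0/M_1$, and $\Gamma e \Gamma \cdot (E, M_0/M_1)_\Lambda$ with those factoring through the $\Lambda$-summand of $E$; projectivity of $\Lambda$ immediately places the latter inside $\alpha(X)$, so the identity reduces to $\Gamma e \Gamma \cdot (E, M_0/M_1)_\Lambda = \Gamma e \Gamma \cdot \alpha(X)$. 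Factoring the idempotent $e \in \End_\Lambda(E)$ as $\iota \circ p$ with $p \colon E \twoheadrightarrow \Lambda$ and $\iota \colon \Lambda \hookrightarrow E$ the canonical projection and inclusion, any $e \cdot \phi \in e \cdot (E, M_0/M_1)_\Lambda$ factors as $\psi \circ p$ with $\psi := \phi \circ \iota \colon \Lambda \to M_0/M_1$. Since $\Lambda$ is projective $\psi$ lifts through $M_0 \twoheadrightarrow M_0/M_1$, so $\psi \circ p \in \alpha(X)$; and the identity $e \cdot (\psi \circ p) = \psi \circ p$ places $\psi \circ p$ in $e \cdot \alpha(X) \subseteq \Gamma e \Gamma \cdot \alpha(X)$, as required.

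Consequently $\stGamma \otimes_\Gamma -$ yields a natural short exact sequence
\[ 0 \to F(X) \to \stGamma \otimes_\Gamma (E, M_0/M_1)_\Lambda \to G(X) \to 0 \]
in $\stGamma \bil \fdmod$ whose middle term is a summand of $\stGamma^n$ and hence projective over $\stGamma$. This exhibits $F(X)$ as a syzygy of $G(X)$ modulo a projective summand, so $\pi F(X) \cong \Omega \pi G(X)$ in $\stGamma \bil \stmod$; naturality of the construction in $X$, together with the fact that $\Omega$ on the stable category can be computed from any short exact sequence with projective middle term, upgrades this to the functorial identity $\pi F = \Omega \pi G$.
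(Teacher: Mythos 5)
Your proposal is correct, and its first half coincides with the paper's: both start from the exact sequence obtained by applying the left exact functor $(E,-)_{\Lambda}$ to $0 \to M_1 \to M_0 \to M_0/M_1 \to 0$, yielding the natural short exact sequence $0 \to \alpha\eta(X) \to (E,M_0/M_1)_{\Lambda} \to \alpha\epsilon(X) \to 0$ with projective middle term. Where you diverge is in proving that this sequence stays exact after passing to $\stGamma$. The paper argues structurally: it invokes the exact sequence of functors $le \to \id_{\Gamma} \to \iota q \to 0$ from Franjou--Pirashvili, observes that $e\,\alpha\epsilon = 0$ forces the relevant comparison map to be an isomorphism, and extracts injectivity of $\iota q(\im(E,g))$ from the snake lemma, concluding with the fact that $q$ preserves projectives as a left adjoint of the exact, epi-preserving $\iota$. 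You instead verify left exactness of $\stGamma \otimes_{\Gamma} - = (-)/\Gamma e \Gamma(-)$ by hand, reducing it to the submodule identity $\alpha(X) \cap \Gamma e\Gamma\,(E,M_0/M_1)_{\Lambda} = \Gamma e\Gamma\,\alpha(X)$ and proving it by factoring $e$ as $\iota \circ p$ and lifting maps out of the projective module $\Lambda$ along $M_0 \twoheadrightarrow M_0/M_1$; this is an elementary, self-contained computation that makes transparent exactly why projectivity of $\Lambda$ (rather than any recollement formalism) is the operative fact, at the cost of element-level bookkeeping with the ideal $\Gamma e\Gamma$ that the paper's functorial argument avoids. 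Your closing step --- projectivity of $\stGamma \otimes_{\Gamma} (E,M_0/M_1)_{\Lambda}$ as a summand of $\stGamma^{n}$, plus the observation that $\Omega$ on the stable category is computable from any short exact sequence with projective middle term and that the construction is natural in $X$ --- matches the paper's conclusion in both content and level of rigor.
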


For the following we don't need $\Lambda$ to be self-injective.
Let $\Gamma \bil \tol$ denote the full subcategory of $\Gamma \bil \fdmod$ given by objects of projective dimension at most 1.
In the final section we prove the following theorem.

\begin{restatable}{theorem}{uniserial}
\label{thm:uniserial}
Let $\Lambda$ be a basic representation-finite algebra and let $\Gamma$ be its Auslander algebra.
Then $\Gamma$ has a quasi-hereditary structure such that the objects of $\Gamma \bil \tol$ are precisely the $\Delta$-filtered $\Gamma$-modules if and only if $\Lambda$ is uniserial.
\end{restatable}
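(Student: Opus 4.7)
The plan is to treat the two directions separately, using throughout that $\mathrm{gl.dim}\,\Gamma\leq 2$, that the Auslander embedding $(E,-)_{\Lambda}\colon\Lambda\bil\fdmod\to\Gamma\bil\fdmod$ is fully faithful with image $\Gamma\bil\proj$, and that $\Gamma\bil\tol$ consists precisely of cokernels $\alpha(f)$ for monomorphisms $f$ in $\Lambda\bil\fdmod$.

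For the ``if'' direction, suppose $\Lambda$ is uniserial. I would totally order $\ind(\Lambda)=\{M_{1},\ldots,M_{m}\}$ so that $M_{i}$ being a subquotient of $M_{j}$ forces $i\leq j$; this refines an honest partial order because every indecomposable $\Lambda$-module is uniserial. Set $\Delta(M)\coloneqq\alpha(\mathrm{rad}(M)\hookrightarrow M)$ and check the quasi-hereditary axioms: each $\Delta(M)$ has simple top $S(M)$, and the unique composition series of $M$ induces a $\Delta$-filtration of $P(M)=(E,M)_{\Lambda}$ with standards appearing in the prescribed order. The inclusion $\mathcal{F}(\Delta)\subseteq\Gamma\bil\tol$ is immediate since every $\Delta(M)$ lies in $\Gamma\bil\tol$ and $\Gamma\bil\tol$ is closed under extensions. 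For the reverse inclusion, any monomorphism $f\colon A\hookrightarrow B$ in $\Lambda\bil\fdmod$ factors through the unique chain of intermediate submodules provided by uniseriality, and applying $\alpha$ step by step exhibits an explicit $\Delta$-filtration of $\alpha(f)$.

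For the ``only if'' direction, I would argue contrapositively. Suppose $\Lambda$ is not uniserial; then some indecomposable $M\in\ind(\Lambda)$ has a non-chain submodule lattice. Suppose for contradiction that there is a quasi-hereditary structure on $\Gamma$ with $\mathcal{F}(\Delta)=\Gamma\bil\tol$. Pick two incomparable submodules $L_{1},L_{2}\subsetneq M$. Then $(E,L_{1})_{\Lambda}$ and $(E,L_{2})_{\Lambda}$ are incomparable $\Gamma$-submodules of $(E,M)_{\Lambda}$, both lying in $\Gamma\bil\tol=\mathcal{F}(\Delta)$, and neither their sum nor their intersection corresponds to $(E,L)_{\Lambda}$ for a single $\Lambda$-submodule $L$. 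Examining a $\Delta$-filtration of $(E,M)_{\Lambda}$ compatible with both inclusions simultaneously forces multiplicity and Ext-vanishing constraints among the standards that would necessitate the submodule lattice of $M$ being a chain, a contradiction.

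The main obstacle is the ``only if'' direction: ruling out every conceivable quasi-hereditary ordering on $\Gamma$ requires more than a direct multiplicity count. A robust route is to first deduce from $\mathcal{F}(\Delta)=\Gamma\bil\tol$ that the characteristic tilting module of the QH structure is essentially pinned down as the additive generator of $\Gamma\bil\tol\cap\mathcal{F}(\nabla)$; in the self-injective case this coincides with the tilting-cotilting module $T$ from Section~\ref{sec:from-to}, and one can then invoke the explicit description of its indecomposable summands established there to extract uniseriality of $\Lambda$. The non-self-injective case requires an analogous construction of the relevant tilting-cotilting module from the recollement, but the guiding principle -- that $\mathcal{F}(\Delta)=\Gamma\bil\tol$ rigidifies the characteristic tilting module so much that it can exist only when the submodule lattice of every indecomposable $\Lambda$-module is a chain -- remains the same.
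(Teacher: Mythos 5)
The decisive gap is in the ``only if'' direction, which is where the real content of the theorem lies. Your contrapositive argument stops at the assertion that a $\Delta$-filtration of $(E,M)_{\Lambda}$ ``compatible with both inclusions'' forces multiplicity and Ext constraints yielding a contradiction; no such statement is formulated, let alone proved. The fallback route through the characteristic tilting module also does not work in the stated generality: the recollement and the module $T$ of Section \ref{sec:from-to} are only constructed for self-injective $\Lambda$, the paper gives no explicit description of the indecomposable summands of $T$ from which uniseriality could be read off, and the theorem concerns arbitrary basic representation-finite $\Lambda$. The paper's argument is different and you would need something like it: for $M\in\ind(\Lambda)$ a submodule of an indecomposable injective, the hypothesis $\cF(\Delta)=\Gamma\bil\tol$ forces $\Delta(M)$ to have a projective resolution $0\to (E,N)_{\Lambda}\xrightarrow{(E,f)} (E,M)_{\Lambda}\to\Delta(M)\to 0$ with $f\colon N\to M$ a proper monomorphism; for \emph{any} proper submodule $M'\subset M$ the module $\alpha(M'\hookrightarrow M)$ lies in $\cF(\Delta)$ and has top $S(M)$, hence surjects onto $\Delta(M)$, and lifting this surjection to the projective presentations produces a factorization $M'\hookrightarrow N\hookrightarrow M$. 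Thus $N$ is the unique maximal submodule of $M$, and iterating down the chain (together with the dual argument for $\cF(\nabla)=\Gamma\bil\divbl$ and the indecomposable projectives) shows $\Lambda$ is Nakayama. This works uniformly for every adapted partial order, which is exactly the difficulty you flagged but did not resolve.

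In the ``if'' direction your construction agrees in substance with the paper's ($\Delta(M)\cong P(M)/P(\rad M)$), but the order you impose is reversed relative to the convention of Subsection \ref{subsec:QH}: you make subquotients \emph{smaller}, whereas $\Delta(M)$ is the largest quotient of $P(M)$ whose composition factors $S(\rho)$ satisfy $\rho\le [M]$, and the composition factors of $P(M)/P(\rad M)$ are precisely the $S(N)$ with $N$ surjecting onto $M$ --- such $N$ have $M$ as a subquotient, so under your order $[N]\ge[M]$ and these factors would be forbidden (already for $M$ simple and $\Lambda$ non-semisimple your order gives $\Delta(M)\ne P(M)$). The paper orders by Loewy length with \emph{longer} modules smaller. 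Separately, your argument for $\Gamma\bil\tol\subseteq\cF(\Delta)$ by factoring a monomorphism through the chain of intermediate submodules breaks down when the target is decomposable, since a submodule of $B_1\oplus B_2$ need not be a direct sum of submodules of the $B_i$; the paper instead obtains this inclusion from \cite[Lemma 4.1*]{DR1992}, using that all costandard modules have injective dimension at most $1$.
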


This was already observed in \cite{RZ2014} in the particular case of the Auslander algebra of $k[x]/\langle x^{N} \rangle$.
The theorem implies that $T$ arises as the characteristic tilting module of a quasi-hereditary structure on $\Gamma$ if and only if $\Lambda$ is uniserial, i.e. if $\Lambda$ is a self-injective Nakayama algebra.

The content is organized as follows.
In Section \ref{sec:subm} we recall the notion of a category of finitely presented functors, and that the module category $\Gamma \bil \fdmod$ is equivalent to the category of finitely presented additive contravariant functors from $\Lambda \bil \fdmod$ to the category of abelian groups.
We also give the basic properties of the functor $\alpha$, and
recall characterizations of the projective and injective objects in the category of finitely presented functors.

In Section \ref{sec:self-inj} we restrict our attention to the Auslander algebras of self-injective algebras.
Then we study the recollement induced by the idempotent $e \in \Gamma$ and introduce the tilting and cotilting module $T$.
Moreover we recall some properties of the stable Auslander algebra $\stGamma$.

In Section \ref{sec:from-to} we consider the functors $F$ and $G$ that arise as compositions of functors studied in the previous sections. 
We prove Theorems \ref{thm:induced} and \ref{thm:difference} for these functors and thereby generalise the situation in \cite{RZ2014}.

Section \ref{sec:nakayama} is dedicated to the proof of Theorem \ref{thm:uniserial}. First we give some properties of the Nakayama algebras and recall the notion of a left strongly quasi-hereditary structure.
In Subsection \ref{subsec:QH-Naka} we introduce a quasi-hereditary structure on the Auslander algebras of the Nakayama algebras which fulfils the conditions of Theorem \ref{thm:uniserial}. 
In Subsection \ref{subsec:only-if} we prove that no other Auslander algebras satisfy those conditions.


\section{Submodule Categories}
\label{sec:subm}

Recall that $\Lambda$ is a basic representation-finite algebra.

\begin{defn}
\label{def:submodulecat}
We define functors:
\begin{align*}
	\eta &\colon \cS(\Lambda) \rightarrow T_2({\Lambda}) \bil \fdmod, \quad f \mapsto f, \\
	\epsilon &\colon \cS(\Lambda) \rightarrow T_2({\Lambda}) \bil \fdmod, \quad f \mapsto \cok(f).
\end{align*}
The functor $\eta$ is simply the inclusion of $\cS(\Lambda)$ in $T_2(\Lambda) \bil \fdmod$.
On morphisms, $\epsilon$ is given by the induced maps of cokernels.
Notice that $\epsilon$ is full and faithful and its essential image is the full subcategory $\cE(\Lambda)$ of $T_2(\Lambda)$, hence we can consider $\epsilon$ as a composition of an equivalence $\cS(\Lambda) \rightarrow \cE(\Lambda)$ followed by $\eta$.
\end{defn}

Now we recall some well known facts on representable functors. 
These go back to Auslander \cite{auslander1966}, Freyd \cite{freyd1964,freyd1966} and Gabriel \cite{gabriel1962}, 
while \cite{lenzing1998} contains a handy summary of those techniques.

Let $\cA$ be an additive category.
We consider the category $\Fun(\cA)$ of additive functors from $\cA^{\op}$ to the category $\Ab$ of abelian groups, with morphisms given by natural transformations.
We say a functor $F \in \Fun(\cA)$ is representable if $F$ is isomorphic to $(-,M)_{\cA}$ for some $M \in \ob(\cA)$.
We say $F$ is finitely presented if there exist representable functors $(-,M)_{\cA},(-,N)_{\cA}$ and an exact sequence
\[
\begin{tikzcd}
(-,M)_{\cA} \ar[r] & (-,N)_{\cA} \ar[r] & F \ar[r] & 0.
\end{tikzcd}
\]
We denote the full subcategory of finitely presented functors by $\fun(\cA)$.
The category $\fun(\cA)$ is abelian, cf. \cite[Theorem 5.11]{freyd1964}.
To reduce encumbrance we write $\fun(\Lambda) \coloneqq \fun(\Lambda \bil \fdmod)$.

The following lemma comes from applying \cite[Theorem 5.35]{freyd1964} to the opposite of $\Lambda \bil \fdmod$.

\begin{lemma}
\label{lem:representable}
The functor $M \mapsto (-,M)_{\Lambda}$ from $\Lambda \bil \fdmod$ to $\fun(\Lambda)$ induces an equivalence of categories from $\Lambda \bil \fdmod$ to the full subcategory of representable functors in the category $\fun(\Lambda)$.
Moreover, the representable functors are the projective objects of $\fun(\Lambda)$.
\end{lemma}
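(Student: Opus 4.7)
The plan is to deduce both assertions from the contravariant Yoneda lemma combined with idempotent completeness of $\Lambda\bil\fdmod$. Let $Y \colon \Lambda\bil\fdmod \rightarrow \fun(\Lambda)$ denote the purported embedding $M \mapsto (-,M)_{\Lambda}$.

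For the first assertion I would establish that $Y$ is fully faithful. The contravariant Yoneda lemma supplies a natural isomorphism $\Hom_{\Fun(\Lambda\bil\fdmod)}((-,M)_{\Lambda}, F) \cong F(M)$ for every $F \in \Fun(\Lambda\bil\fdmod)$. Specialising to $F = (-,N)_{\Lambda}$ produces a natural isomorphism with $(M,N)_{\Lambda}$, which is exactly full faithfulness. Essential surjectivity onto the subcategory of representables is immediate from the definition, so $Y$ restricts to the claimed equivalence.

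For the second assertion, the representable $(-,M)_{\Lambda}$ is projective in $\fun(\Lambda)$ because Yoneda identifies $\Hom_{\fun(\Lambda)}((-,M)_{\Lambda}, -)$ with evaluation at $M$, and evaluation sends epimorphisms in $\fun(\Lambda)$ to surjections of abelian groups. The key point is that epis in $\fun(\Lambda)$ coincide with epis in $\Fun(\Lambda\bil\fdmod)$ (because $\fun(\Lambda)$ is closed under cokernels in $\Fun$, which is the heart of the abelian structure cited just before the lemma), and epis in $\Fun$ are objectwise surjective. Conversely, given a projective $P$ in $\fun(\Lambda)$, a finite presentation $(-,M)_{\Lambda} \rightarrow (-,N)_{\Lambda} \rightarrow P \rightarrow 0$ splits by projectivity, realising $P$ as a retract of the representable $(-,N)_{\Lambda}$. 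Using full faithfulness of $Y$, the idempotent endomorphism of $(-,N)_{\Lambda}$ cutting out $P$ comes from an idempotent on $N$; idempotent completeness of the abelian category $\Lambda\bil\fdmod$ then yields a direct summand $N' \subseteq N$ with $P \cong (-,N')_{\Lambda}$.

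The main obstacle is the bookkeeping needed to align ``projective'' as tested inside $\fun(\Lambda)$ with the notion delivered by Yoneda in $\Fun(\Lambda\bil\fdmod)$. Once the cokernel-closure of $\fun$ in $\Fun$ mentioned above is in hand, the rest is a direct application of Yoneda and Karoubi-completeness of $\Lambda\bil\fdmod$.
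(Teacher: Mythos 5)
Your argument is correct. Note, though, that the paper does not actually prove this lemma: it simply cites Freyd's Theorem 5.35 applied to the opposite of $\Lambda \bil \fdmod$. What you have written is, in effect, the standard proof of that cited result, and all the steps hold up: full faithfulness and essential surjectivity onto representables follow from the contravariant Yoneda lemma; projectivity of $(-,M)_{\Lambda}$ follows because evaluation at $M$ is exact once you know epimorphisms in $\fun(\Lambda)$ agree with objectwise surjections (which you correctly reduce to the closure of $\fun(\Lambda)$ under cokernels in $\Fun(\Lambda\bil\fdmod)$); and the converse uses the splitting of the presentation $(-,N)_{\Lambda} \twoheadrightarrow P$ together with the splitting of the resulting idempotent on $N$ in the Karoubi-complete category $\Lambda\bil\fdmod$. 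The only thing your write-up buys beyond the paper is self-containedness; the only thing the citation buys is brevity. If you wanted to polish the last step, you could say explicitly that the additive functor $M \mapsto (-,M)_{\Lambda}$ preserves split idempotents, so the image of the idempotent $(-,e)$ is $(-,N')_{\Lambda}$, but this is cosmetic.
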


Let $D \coloneqq (-,k)_{k}$ be the vector space duality.
If we apply \cite[Theorem 5.35]{freyd1964} to $\Lambda \bil \fdmod$ and then apply the vector space duality we obtain the following dual statement to Lemma \ref{lem:representable}, cf. \cite[Exercise A. Chapter 5]{freyd1964}.

\begin{lemma}
\label{lem:corepresentable}
The functor $M \mapsto D(M,-)_{\Lambda}$ from $\Lambda \bil \fdmod$ to $\fun(\Lambda)$
induces an equivalence from $\Lambda \bil \fdmod$ to the full subcategory of injective objects in $\fun(\Lambda)$.
\end{lemma}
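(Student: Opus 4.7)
The proof strategy, indicated by the parenthetical in the statement, is to apply Freyd's Theorem 5.35 to $\Lambda\bil\fdmod$ itself rather than to its opposite (as in the proof of Lemma \ref{lem:representable}), and then postcompose with the $k$-linear duality $D$. Applied in this direction, Freyd 5.35 yields the mirror of Lemma \ref{lem:representable}: the assignment $M \mapsto (M,-)_\Lambda$ is a contravariant fully faithful embedding of $\Lambda\bil\fdmod$ into the category $\fun((\Lambda\bil\fdmod)^{\op})$ of finitely presented covariant additive functors $\Lambda\bil\fdmod \to \Ab$, with essential image the full subcategory of projective objects.

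I would then invoke that $F \mapsto D \circ F$ induces a contravariant exact equivalence $\fun((\Lambda\bil\fdmod)^{\op}) \to \fun(\Lambda)$. Composing with the contravariant embedding above produces a covariant functor $M \mapsto D(M,-)_\Lambda$ from $\Lambda\bil\fdmod$ to $\fun(\Lambda)$. Since any contravariant exact equivalence of abelian categories interchanges projective with injective objects, the composite is fully faithful with essential image exactly the full subcategory of injective objects in $\fun(\Lambda)$, which is what the lemma claims.

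The main step is to verify that $F \mapsto D \circ F$ really defines such a contravariant exact equivalence between the finitely presented functor categories. Exactness and essential invertibility are immediate because $D$ is an exact involutive duality on finite-dimensional $k$-vector spaces, in which every object of either functor category takes values. The non-trivial point is preservation of finite presentation. First, for each $M \in \Lambda\bil\fdmod$, the identification $D(P,-)_\Lambda \cong (-, \nu P)_\Lambda$ for projective $P$ (where $\nu = D\Hom_\Lambda(-, \Lambda)$ is the Nakayama functor, a standard tensor-hom computation) together with a projective resolution $P_1 \to P_0 \to M \to 0$ in $\Lambda\bil\fdmod$ yields an explicit projective presentation $(-, \nu P_1)_\Lambda \to (-, \nu P_0)_\Lambda \to D(M,-)_\Lambda \to 0$ in $\fun(\Lambda)$. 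Then for a general $F$ with presentation $(N_1,-)_\Lambda \to (N_0,-)_\Lambda \to F \to 0$, applying $D$ realises $D \circ F$ as the kernel (taken in the abelian category $\fun(\Lambda)$, cf.\ \cite[Theorem 5.11]{freyd1964}) of the induced morphism $D(N_0,-)_\Lambda \to D(N_1,-)_\Lambda$ between objects of $\fun(\Lambda)$, so that $D \circ F \in \fun(\Lambda)$ as required.
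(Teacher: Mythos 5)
Your argument is correct and is precisely the route the paper takes: the paper's entire ``proof'' is the one-sentence remark that one applies \cite[Theorem 5.35]{freyd1964} to $\Lambda \bil \fdmod$ itself and then post-composes with the duality $D$. You have in fact supplied more detail than the paper does, in particular the verification (via $D(P,-)_{\Lambda} \cong (-,\nu P)_{\Lambda}$ and closure of $\fun(\Lambda)$ under kernels) that $D \circ F$ remains finitely presented, which is the only point the paper leaves implicit.
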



\subsection{A Functor to Representations of the Auslander Algebra}

Any functor in $\fun(\Lambda)$  is determined by its value on the Auslander generator and its endomorphisms,
so $\fun(\Lambda)$ is equivalent to $\Gamma \bil \fdmod$, this is \cite[Chapitre II, Proposition 2]{gabriel1962}.
Note that the representable functors $(-,M)_{\Lambda}$ correspond to the right $\End(E)$-modules $(E,M)_{\Lambda}$ acted upon by pre-composition, but these may also be viewed as left $\Gamma$-modules.

We will consider the functor
\[
	\alpha \coloneqq \cok (E,-)_{\Lambda} \colon T_2({\Lambda}) \bil \fdmod \rightarrow \Gamma \bil \fdmod,
\]
which was already studied by Auslander and Reiten in \cite{AR1976}.

\begin{rem}
The Gabriel quiver of $\Gamma$ is the opposite quiver of the Auslander-Reiten quiver of $\Lambda \bil \fdmod$ with relations given by the Auslander-Reiten translate.
The indecomposable projective $\Gamma$-modules are represented by the indecomposable objects of $\Lambda \bil \fdmod$.
More precisely, given $M \in \ind (\Lambda)$, then $(E,M)_{\Lambda}$ is the indecomposable projective $\Gamma$-module arising as the projective representation of the opposite of the Auslander-Reiten quiver of $\Lambda \bil \fdmod$ generated at the vertex of $M$.
\end{rem}

\begin{defn}
\label{def:objective}
We call a category a \emph{Krull-Schmidt category} if every object decomposes into a finite direct sum of indecomposable objects in a unique way up to isomorphism.

A functor $F \colon \cA \rightarrow \cB$ between Krull-Schmidt categories is called \emph{objective} if the induced functor $\cA/\ker(F) \rightarrow \cB$ is faithful.
\end{defn}

Our notion of an objective functor is equivalent to that used in \cite{RZ2014}.
For more information on this property we refer to \cite{RZ2015}.

\begin{prp}
\label{prp:alphafunc}
The functor $\alpha$ is full, dense and objective. 
Its kernel is $\add((E \overset{\id}{\rightarrow} E) \oplus (E \rightarrow 0))$.
\end{prp}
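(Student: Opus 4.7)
The plan is to exploit the equivalence $\fun(\Lambda) \simeq \Gamma \bil \fdmod$ together with the projectivity of representable functors (Lemma \ref{lem:representable}). Under this identification $\alpha$ sends $(M_1 \overset{f_M}{\to} M_0)$ to the cokernel of $(-, M_1)_\Lambda \to (-, M_0)_\Lambda$ in $\fun(\Lambda)$, so all four claims reduce to Yoneda-style manipulations of projective presentations.

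\emph{Density} is immediate from the definition of a finitely presented functor: any $F \in \fun(\Lambda)$ fits in an exact sequence $(-, M)_\Lambda \to (-, N)_\Lambda \to F \to 0$, and Lemma \ref{lem:representable} lifts the left-hand map to a morphism $M \to N$ of $\Lambda$-modules whose $\alpha$-image is $F$. For \emph{fullness}, given $\phi \colon \alpha(f_M) \to \alpha(f_N)$, projectivity of $(-, M_0)_\Lambda$ lifts the composite $(-, M_0)_\Lambda \twoheadrightarrow \alpha(f_M) \xrightarrow{\phi} \alpha(f_N)$ through $(-, N_0)_\Lambda \twoheadrightarrow \alpha(f_N)$, producing $g_0 \colon M_0 \to N_0$ by Yoneda; a second lifting yields $g_1 \colon M_1 \to N_1$ with $f_N g_1 = g_0 f_M$, giving a preimage $(g_1, g_0)$ of $\phi$.

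For the \emph{kernel} I would observe that $\alpha(f_M) = 0$ iff $(-, f_M)$ is epic in $\fun(\Lambda)$. Since epimorphisms of functors to $\Ab$ are pointwise, evaluation at $M_0$ forces $\id_{M_0}$ to factor through $f_M$, so $f_M$ is a split epimorphism. Splitting $M_1 \cong K \oplus M_0$ with $f_M$ the projection exhibits the object as $(M_0 \overset{\id}{\to} M_0) \oplus (K \to 0)$, which lies in $\add((E \overset{\id}{\to} E) \oplus (E \to 0))$ since $K, M_0 \in \add(E)$.

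The main step is \emph{objectivity}. Given $(g_1, g_0) \colon f_M \to f_N$ with $\alpha(g_1, g_0) = 0$, vanishing of the induced map on cokernels means that the composite $(-, M_0)_\Lambda \to (-, N_0)_\Lambda \twoheadrightarrow \alpha(f_N)$ is zero, so a Yoneda-style lift produces $h \colon M_0 \to N_1$ with $f_N h = g_0$. I would then factor $(g_1, g_0)$ through the kernel object $(M_0 \oplus M_1 \to M_0)$ with structure map $(\id, 0)$, using components $(f_M, \id_{M_1}) \colon M_1 \to M_0 \oplus M_1$ and $\id_{M_0}$ at the codomains on the first leg, followed by components $(h, g_1 - h f_M) \colon M_0 \oplus M_1 \to N_1$ and $g_0$ at the codomains on the second. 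The compatibilities in $T_2(\Lambda) \bil \fdmod$ then reduce to the single relation $f_N g_1 = g_0 f_M$. Finding this factorisation is the principal obstacle; once guessed, the verification is routine matrix bookkeeping.
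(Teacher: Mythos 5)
Your proposal is correct and follows essentially the same route as the paper: density and fullness via lifting projective presentations through the Yoneda/representable-functor equivalence, and objectivity via the same factorisation through $(M_0\oplus M_1\xrightarrow{[\id,0]}M_0)$ using the lift $h$ with $f_Nh=g_0$. The only (harmless) variation is your direct split-epimorphism argument for the kernel, where the paper instead deduces the reverse inclusion from the objectivity factorisation applied to identity morphisms; also note that verifying your factorisation uses both $f_Nh=g_0$ and $f_Ng_1=g_0f_M$, not just the latter.
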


\begin{rem}
The indecomposable objects of $\ker(\alpha)$ are either of the form
$(M \overset{\id}{\rightarrow} M)$ or $(M \rightarrow 0)$ for $M \in \ind(\Lambda)$.
Since $\Lambda$ is of finite representation type, say with $m$ indecomposable objects up to isomorphism, 
this means $\ker(\alpha)$ has exactly $2m$ indecomposable objects up to isomorphism.
\end{rem}

\begin{proof}[Proof of Proposition \ref{prp:alphafunc}]
We imitate the proof of \cite[Proposition 3]{RZ2014}.
Let $X$ be an object in $\Gamma \bil \fdmod$, it has a projective presentation
\[
\begin{tikzcd}
(E,M_1)_{\Lambda} \ar[r,"p_1"] & (E,M_0)_{\Lambda} \ar[r,"p_0"] & X \ar[r] & 0.
\end{tikzcd}
\]
By Lemma \ref{lem:representable} there is $f \in (M_1,M_0)_{\Lambda}$ such that $p_1 = (E,f)_{\Lambda}$.
But then $\alpha(f) \simeq X$, so $\alpha$ is dense. 
Let $\Phi \in \Hom_{\Gamma}(X,Y)$ and let $f \in (M_1,M_0)_{\Lambda}$ and $g \in (N_1,N_0)_{\Lambda}$ be such that $\alpha(f) \cong X$ and $\alpha(g) \cong Y$. 
Now $\Phi$ can be extended to a map $(\Phi_1,\Phi_0)$ of the projective presentations of $X$ and $Y$. 
There are $\phi_i$ for $i=0,1$ such that $(E,\phi_i) \cong \Phi_i$.
But then clearly $\alpha(\phi_1,\phi_0) \cong \Phi$, thus $\alpha$ is full.

Clearly $\alpha(M \overset{\id}{\rightarrow} M) \cong 0 \cong \alpha(M \rightarrow 0)$.
Let 
\[
	(g_1,g_0) \in \Hom_{T_2({\Lambda})}((M_1 \overset{f_M}{\rightarrow} M_0),(N_1 \overset{f_N}{\rightarrow} N_0))
\]
be such that $\alpha(g_1,g_0) = 0$. 
We want to show that $(g_1,g_0)$ factors through a $T_2({\Lambda})$-module of the form $(M \overset{\id}{\rightarrow} M) \oplus (N \rightarrow 0)$.

Consider the following commutative diagram:
\[
\begin{tikzcd}[column sep = 3.5em, row sep = 2.5em]
(E,M_1)_{\Lambda} \ar[r,"{(E,f_M)_{\Lambda}}"] \ar[d,swap,"{(E,g_1)_{\Lambda}}"] 
	& (E,M_0)_{\Lambda} \ar[r] \ar[d,"{(E,g_0)_{\Lambda}}"] \ar[dl,dashed,swap,"h'"] & \alpha(f_M) \ar[r] \ar[d,"{{\alpha(g_1,g_0)} = 0}"] & 0\\
(E,N_1)_{\Lambda} \ar[r,swap,"{(E,f_N)_{\Lambda}}"] & (E,N_0)_{\Lambda} \ar[r,swap,"c"] & \alpha(f_N) \ar[r] & 0.
\end{tikzcd}
\]
The rows are projective presentations.
Now $c \circ (E,g_0)_{\Lambda} = 0$ and hence there is $h'$ such that $(E,g_0)_{\Lambda} = (E,f_N)_{\Lambda} \circ h'$.
Since the functor $(E,-)_{\Lambda}$ is full there is a map $h \colon M_0 \rightarrow N_1$ such that $h' = (E,h)_{\Lambda}$ and $g_0 = f_N h$.
Then the following diagram in $\Lambda \bil \fdmod$ is commutative:
\[
\begin{tikzcd}
	M_1 \ar[d,"f_M"] \ar[rr,"{[f_M,\id] }"] & & M_0 \oplus M_1 \ar[rr,"{[h,g_1-hf_M]}"] \ar[d,"{[\id,0]}"] & & N_1 \ar[d,"f_N"]\\
	M_0 \ar[rr,"\id"] & & M_0 \ar[rr,"g_0"] & & N_0.
\end{tikzcd}
\]
Note that the compositions of the rows are $g_1$ and $g_0$, and hence $(g_1,g_0)$ factors through the $T_2({\Lambda})$-module 
$( M_0 \oplus M_1 \overset{[\id,0]}{\longrightarrow} M_0)$.
\end{proof}

\begin{rem}
The functors $\epsilon$ and $\eta$ are faithful and hence objective. 
The composition $\alpha  \eta$ is also objective since it is just a restriction of the objective functor $\alpha$ to an additive subcategory.
Moreover $\alpha \epsilon$ is objective because $\epsilon$ is fully faithful and the image of $\epsilon$ contains all objects of $\ker \alpha$.
\end{rem}

The following corollary of Proposition \ref{prp:alphafunc} describes the composition $\alpha \eta$.

\begin{cor}
\label{cor:subm}
Let $\chi \coloneqq \add(E \overset{\id}{\rightarrow} E)$.
Let $\Gamma \bil \tol$ denote the full subcategory of $\Gamma \bil \fdmod$ consisting of objects of projective dimension $\leq 1$.
The functor $\alpha  \eta$ induces an equivalence of categories
\[
	\cS(\Lambda) / \chi \rightarrow \Gamma \bil \tol.
\]
\end{cor}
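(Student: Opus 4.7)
The plan is to combine Proposition \ref{prp:alphafunc} with an identification of the essential image of $\alpha\eta$. Concretely, I would show three things: $(a)$ the kernel of $\alpha\eta$ is exactly $\chi$, $(b)$ the essential image of $\alpha\eta$ is exactly $\Gamma\bil\tol$, and $(c)$ the induced functor $\cS(\Lambda)/\chi \to \Gamma\bil\tol$ is fully faithful and essentially surjective. Once $(a)$ and $(b)$ are in hand, $(c)$ will follow from objectivity, fullness, and density of $\alpha\eta$.

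For $(a)$, by Proposition \ref{prp:alphafunc} we have $\ker(\alpha)=\add((E\xrightarrow{\id}E)\oplus(E\to 0))$. Intersecting with $\cS(\Lambda)$ kills the summand $(E\to 0)$, since that map is not a monomorphism; what remains is precisely $\chi$. For $(b)$, given $f\in\cS(\Lambda)$ the short exact sequence $0\to M_1\xrightarrow{f} M_0\to M_0/M_1\to 0$ becomes, after applying the left-exact functor $(E,-)_\Lambda$, a short exact sequence
\[
0\to (E,M_1)_\Lambda \to (E,M_0)_\Lambda \to \alpha(f)\to 0,
\]
whose first two terms are projective by Lemma \ref{lem:representable}; hence $\alpha(f)\in\Gamma\bil\tol$. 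Conversely, for $X\in\Gamma\bil\tol$ choose a projective resolution $0\to P_1\to P_0\to X\to 0$. By Lemma \ref{lem:representable} this is isomorphic to $0\to(E,M_1)_\Lambda\xrightarrow{(E,f)_\Lambda}(E,M_0)_\Lambda\to X\to 0$ for some $f\colon M_1\to M_0$. Since $E$ is an additive generator, $(E,-)_\Lambda$ is faithful and reflects monomorphisms, so $f$ is a monomorphism, i.e.\ $f\in\cS(\Lambda)$, and $\alpha\eta(f)\cong X$.

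For $(c)$, the remark following Proposition \ref{prp:alphafunc} already notes that $\alpha\eta$ is objective, so by Definition \ref{def:objective} the induced functor $\cS(\Lambda)/\ker(\alpha\eta)\to\Gamma\bil\fdmod$ is faithful; combined with $(a)$ this gives faithfulness of $\cS(\Lambda)/\chi\to\Gamma\bil\tol$. Fullness is inherited from $\alpha$ (which is full by Proposition \ref{prp:alphafunc}) together with full-faithfulness of $\eta$, and essential surjectivity onto $\Gamma\bil\tol$ is exactly step $(b)$.

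The only step that requires any care is the converse direction of $(b)$: one must ensure that the map $f$ lifting the differential $P_1\to P_0$ is genuinely a monomorphism and not merely a map whose image under $(E,-)_\Lambda$ is injective. This is the point where one uses that $E$ is an additive generator (so $(E,-)_\Lambda$ is faithful and left exact), allowing one to conclude $\ker(f)=0$ from $\ker((E,f)_\Lambda)=0$. Everything else is a formal consequence of Proposition \ref{prp:alphafunc} and the two representability lemmas.
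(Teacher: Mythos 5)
Your proposal is correct and follows essentially the same route as the paper: the kernel computation via Proposition \ref{prp:alphafunc}, the identification of the essential image using left-exactness of $(E,-)_{\Lambda}$ and the equivalence of $(E,-)_{\Lambda}$ onto $\Gamma\bil\proj$ (which is exactly how the paper, too, guarantees that the lifted differential $f\colon M_1\to M_0$ is a monomorphism), and the formal conclusion from fullness, density and objectivity. No gaps.
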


\begin{proof}
We know already that $\alpha \eta$ is full and objective and by Proposition \ref{prp:alphafunc} the kernel of $\alpha \eta$ is $\chi$.
It remains to show that the essential image of $\alpha  \eta$ is $\Gamma \bil \tol$.
Let $f \in (M_1,M_0)_{\Lambda}$ be a monomorphism. Since $\Hom$-functors are left-exact, $(E,f)$ is a monomorphism and $\alpha(f)$ has a projective resolution
\[
\begin{tikzcd}[column sep = 2.6em]
0 \ar[r] & (E,M_1)_{\Lambda} \ar[r,"{(E,f)_{\Lambda}}"] & (E,M_0)_{\Lambda} \ar[r] & \alpha(f) \ar[r] & 0.
\end{tikzcd}
\]

Using that $(E,-)_{\Lambda}$ is an equivalence when considered as a functor to $\Gamma \bil \proj$, we see any object in $\Gamma \bil \tol$ has a projective resolution of this form where $f \colon M_1 \rightarrow M_0$ is a monomorphism.
\end{proof}

\begin{rem}
We know $\alpha$ behaves really well with respect to the additive structure on  $T_2(\Lambda) \bil \fdmod$ and $\Gamma \bil \fdmod$, and these are both abelian categories.
However $\alpha$ is far from being exact, in fact it preserves neither epimorphisms nor monomorphisms.
Take for example $\Lambda = k[x]/\langle x^2 \rangle$ and let $_{\Lambda}k$ be the simple $\Lambda$ module.
Consider a monomorphism $f \colon (0 \rightarrow   {_{\Lambda}\Lambda} ) \rightarrow (_{\Lambda}\Lambda \overset{\id}{\rightarrow} {_{\Lambda}\Lambda)}$.
Since $\alpha(_{\Lambda}\Lambda \overset{\id}{\rightarrow} {_{\Lambda}\Lambda}) = 0$ but $\alpha(0 \rightarrow {_{\Lambda}\Lambda})  \neq 0$, $\alpha(f)$ is not a monomorphism. 
Also there is an epimorphism $g \colon (_{\Lambda}\Lambda \overset{\id}{\rightarrow} {_{\Lambda}\Lambda}) \rightarrow (_{\Lambda}\Lambda \rightarrow {_{\Lambda}k})$, but $\alpha(_{\Lambda}\Lambda \rightarrow {_{\Lambda}k}) \neq 0$, thus $\alpha(g)$ is no epimorphism.
\end{rem}

There are several characterizations of the subcategory $\Gamma \bil \tol$, one of which also justifies the notation we use for it.

\begin{prp}
\label{prp:torsionless}
The following are equivalent for an object $X \in \Gamma \bil \fdmod$.
\begin{enumerate}[(i)]
\item
$X$ is in $\Gamma \bil \tol$.
\item
The injective envelope of $X$ is projective.
\item
$X$ is torsionless, i.e. a submodule of a projective module.
\end{enumerate}
\end{prp}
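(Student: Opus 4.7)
The plan is to establish the three equivalences via the cycle $(i) \Rightarrow (iii) \Rightarrow (ii) \Rightarrow (i)$.

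For $(i) \Rightarrow (iii)$, I would invoke Corollary \ref{cor:subm}: every $X \in \Gamma \bil \tol$ is isomorphic to $\alpha(f)$ for some monomorphism $f \colon M_1 \hookrightarrow M_0$ in $\Lambda \bil \fdmod$. Applying the left-exact functor $(E, -)_\Lambda$ to the short exact sequence $0 \to M_1 \to M_0 \to M_0/M_1 \to 0$ would yield exactness of $0 \to (E, M_1)_\Lambda \to (E, M_0)_\Lambda \to (E, M_0/M_1)_\Lambda$, so $X \cong \cok((E, f)_\Lambda)$ embeds naturally as a submodule of the projective $\Gamma$-module $(E, M_0/M_1)_\Lambda$, which realises $X$ as a torsionless module.

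For $(iii) \Rightarrow (ii)$, I would appeal to the classical property of Auslander algebras that each indecomposable projective $\Gamma$-module $(E, M)_\Lambda$ embeds essentially into a projective-injective $\Gamma$-module, namely $(E, I(M))_\Lambda$, where $I(M)$ is the injective envelope of $M$ in $\Lambda \bil \fdmod$. Given a torsionless $X$ with $X \hookrightarrow P$ and $P$ projective, I would extend to $X \hookrightarrow P \hookrightarrow I(P)$ with $I(P)$ projective-injective; the universal property of the injective envelope, combined with essentiality of $X \hookrightarrow I(X)$, would produce a monomorphism $I(X) \hookrightarrow I(P)$, which splits off $I(X)$ as a direct summand of the projective $I(P)$. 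Hence $I(X)$ would be projective.

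For $(ii) \Rightarrow (i)$, given $I(X)$ projective, I would consider the short exact sequence $0 \to X \to I(X) \to I(X)/X \to 0$. Auslander's classical theorem gives $\mathrm{gl.dim}(\Gamma) \leq 2$, so $\mathrm{pd}(I(X)/X) \leq 2$. The long exact sequence of $\ext$ then yields the standard estimate $\mathrm{pd}(X) \leq \max(\mathrm{pd}(I(X)), \mathrm{pd}(I(X)/X) - 1) \leq \max(0, 1) = 1$, closing the cycle.

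The main obstacle will be the key fact underpinning step $(iii) \Rightarrow (ii)$: that $(E, I)_\Lambda$ is an injective $\Gamma$-module whenever $I$ is an injective $\Lambda$-module, so that projective $\Gamma$-modules admit projective-injective envelopes. This classical property of Auslander algebras is not developed in the excerpt and is where the proof must invest its technical weight; it can be established via the Nakayama functor and Auslander--Reiten duality, or more directly by identifying $(E, I)_\Lambda$ with $D(M, -)_\Lambda$ for an appropriate $\Lambda$-module $M$ through Lemma \ref{lem:corepresentable}.
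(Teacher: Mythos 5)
Your proof is correct, but it routes the three equivalences differently from the paper. The paper proves $(i)\Rightarrow(ii)$ directly: it takes a length-one projective resolution of $X$, maps it into the injective envelopes of its terms, and uses the snake lemma together with the dominant dimension bound to embed $X$ into a projective-injective module; $(ii)\Rightarrow(iii)$ is immediate; and $(iii)\Rightarrow(i)$ is obtained by prolonging $X\hookrightarrow P$ to a projective resolution of $P/X$ of length at most $2$ and reading off $X$ as its first syzygy. You instead run the cycle $(i)\Rightarrow(iii)\Rightarrow(ii)\Rightarrow(i)$: your first step exploits Corollary \ref{cor:subm} and left-exactness of $(E,-)_{\Lambda}$ to exhibit $X\cong \im\,(E,g)_{\Lambda}\subseteq (E,M_0/M_1)_{\Lambda}$ directly, which is arguably cleaner than the paper's snake-lemma diagram (and is not circular, since Corollary \ref{cor:subm} is established beforehand without appeal to this proposition); your second step is the standard injective-envelope argument; and your third replaces the explicit resolution by the dimension-shifting inequality $\mathrm{pd}(X)\leq\max\bigl(\mathrm{pd}\,I(X),\,\mathrm{pd}(I(X)/X)-1\bigr)$. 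Both arguments ultimately lean on the same two classical facts about Auslander algebras --- dominant dimension $\geq 2$ (so injective envelopes of projectives are projective-injective) and global dimension $\leq 2$ --- which the paper likewise invokes without proof, so your flagging of the former as the main technical debt is consistent with the paper's own usage; discharging it by identifying $(E,I)_{\Lambda}$ with $D(P,-)_{\Lambda}$ for $I=\nu P$ via Lemma \ref{lem:corepresentable} is exactly the right move.
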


\begin{proof}
(i) $\Longrightarrow$ (ii).
Let $X$ be of projective dimension $\leq 1$, so it has a projective resolution $\begin{tikzcd}[column sep = 1.5em] 0 \ar[r] & P_1 \ar[r,"u"] & P_0 \ar[r] & X \ar[r] & 0. \end{tikzcd}$
Let $v_i \colon P_i \rightarrow I(P_i)$ be the injective envelope of $P_i$ for $i=0,1$ and consider the following diagram:
\[
\begin{tikzcd}
0 \ar[r] & P_1 \ar[r,"u"] \ar[d,"v_1"] & P_0 \ar[r] \ar[d,"v_0"] & X \ar[r] \ar[d,"f"] & 0  \\
0 \ar[r] & I(P_1) \ar[r,"u'"] & I(P_0) \ar[r] & X' \ar[r] & 0.
\end{tikzcd}
\]
Here $X'$ is defined as the module making the diagram commutative with exact rows. 
Since $v_0$ is injective the snake lemma yields a monomorphism $\ker(f) \rightarrow \cok(v_1)$, but since any Auslander algebra has dominant dimension $\geq 2$ we can embed $\cok(v_1)$ into a projective-injective module. 
Thus $\ker(f)$ embeds in a projective-injective module $I(\ker(f))$.
Again using that the dominant dimension of $\Gamma$ is $\geq 2$, we know $I(P_i)$ is projective for $i=0,1$. 
Thus the lower sequence splits and $X'$ is projective-injective. 
The inclusion $\ker(f) \rightarrow I(\ker(f))$ factors through $X$, because $I(\ker(f))$ is injective, and thus we get a monomorphism $X \rightarrow X' \oplus I(\ker(f))$.

(ii) $\Longrightarrow$ (iii).
Clear.

(iii) $\Longrightarrow$ (i).
We have an exact sequence
$
\begin{tikzcd}[column sep = 1.5em]
0 \ar[r] & X \ar[r] & P \ar[r,"\pi"] & C \ar[r] & 0,
\end{tikzcd}
$
where $P$ is projective. Then $C$ has a projective resolution
\[
\begin{tikzcd}
0 \ar[r] & P_2 \ar[r,"p_2"] & P_1 \ar[r,"p_1"] & P \ar[r,"\pi"] & C \ar[r] & 0,
\end{tikzcd}
\]
with $\im(p_1) \cong X$. 
Thus $X$ has a projective resolution of length $\leq 1$.
\end{proof}

We say a module is \emph{divisible} if it is a factor module of an injective module.
We denote the full subcategory of divisible $\Gamma$-modules by $\Gamma \bil \divbl$. We get the following dual statement to Proposition \ref{prp:torsionless}.

\begin{prp}
\label{prp:divisable}
The following are equivalent for an object $X \in \Gamma \bil \fdmod$.
\begin{enumerate}[(i)]
\item
$X$ has injective dimension $\leq 1$.
\item
The the projective cover of $X$ is injective.
\item
$X$ is in $\Gamma \bil \divbl$.
\end{enumerate}
\end{prp}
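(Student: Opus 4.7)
The natural approach is to observe that Proposition~\ref{prp:divisable} is precisely the statement obtained from Proposition~\ref{prp:torsionless} by applying the $k$-linear duality $D = \Hom_k(-,k)$, so the plan is to reduce to Proposition~\ref{prp:torsionless} rather than redo the snake-lemma argument in its dual form. The duality $D$ gives a contravariant equivalence $\Gamma \bil \fdmod \rightarrow \Gamma^{\op} \bil \fdmod$, and I would phrase the entire argument in terms of $DX$ as a left $\Gamma^{\op}$-module.

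First I would check that Proposition~\ref{prp:torsionless} applies to $\Gamma^{\op}$. Since $\Lambda^{\op}$ is again basic and representation-finite, with additive generator $D(E)$, one has $\aus(\Lambda^{\op}) = \End_{\Lambda^{\op}}(D(E))^{\op} \cong \End_\Lambda(E) = \Gamma^{\op}$, so $\Gamma^{\op}$ is itself an Auslander algebra. In particular the one ingredient actually used in the proof of Proposition~\ref{prp:torsionless}, namely dominant dimension at least $2$, holds for $\Gamma^{\op}$ (equivalently, this invariant is left-right symmetric).

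Next I would translate the three conditions under $D$. For (i), $X$ has injective dimension $\leq 1$ iff $DX$ has projective dimension $\leq 1$, because $D$ sends injective resolutions to projective resolutions and preserves exactness. For (ii), a surjection $p \colon P \to X$ is a projective cover iff the induced monomorphism $Dp \colon DX \to DP$ is an injective envelope (essentiality dualises to superfluousness), and $P$ is injective iff $DP$ is projective; so (ii) for $X$ is equivalent to the condition ``injective envelope of $DX$ is projective'', i.e.\ (ii) of Proposition~\ref{prp:torsionless} for $DX$. For (iii), $X \in \Gamma \bil \divbl$ means there is an epimorphism $I \twoheadrightarrow X$ with $I$ injective; applying $D$ gives a monomorphism $DX \hookrightarrow DI$ with $DI$ projective, i.e.\ $DX$ is torsionless, and this dualisation is reversible.

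Having set up these three dictionary entries, the equivalence of (i), (ii), (iii) for $X$ follows immediately from the equivalence of the corresponding three conditions for $DX$ proved in Proposition~\ref{prp:torsionless}. The only real step that requires attention is the dictionary itself, in particular the matching in (ii): one must remember that $D$ swaps projective covers with injective envelopes and projective modules with injective modules simultaneously, so the property ``projective cover is injective'' dualises correctly to ``injective envelope is projective''. This is the most delicate point, but it is a general feature of $k$-linear duality on finite-dimensional module categories and requires no further input beyond what is already in the excerpt.
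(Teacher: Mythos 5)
Your proposal is correct and matches the paper's approach: the paper states Proposition~\ref{prp:divisable} without proof as the ``dual statement'' to Proposition~\ref{prp:torsionless}, and your argument is precisely the rigorous version of that reduction, including the one point that genuinely needs checking, namely that $\Gamma^{\op} \cong \aus(\Lambda^{\op})$ so that the dominant-dimension input is available on the other side.
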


Later we will have use for the following lemma, which is due to Auslander and Reiten, see \cite[Propositon 4.1]{AR1974}. 
A proof of the version stated here is found in \cite[Section 6]{RZ2014}.

\begin{lemma}
\label{lem:epim}
Let $f$ be a morphism in $\Lambda \bil \fdmod$. Then $f$ is an epimorphism if and only if $((E,P)_{\Lambda},\alpha(f))_{\Gamma} = 0$ for any projective module $P$ in $\Lambda \bil \fdmod$.
\end{lemma}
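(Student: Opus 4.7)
The plan is to reduce the statement to a Yoneda-type identification and then exploit the projectivity of $(E,P)_{\Lambda}$ as a $\Gamma$-module.

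First I would apply $\Hom_{\Gamma}((E,P)_{\Lambda}, -)$ to the standard projective presentation
\[
\begin{tikzcd}
(E,M_1)_{\Lambda} \ar[r,"(E{,}f)_{\Lambda}"] & (E,M_0)_{\Lambda} \ar[r] & \alpha(f) \ar[r] & 0
\end{tikzcd}
\]
of $\alpha(f)$. By Lemma \ref{lem:representable} the functor $(E,-)_{\Lambda}$ is fully faithful from $\Lambda\bil\fdmod$ into $\fun(\Lambda) \simeq \Gamma\bil\fdmod$, so for any $P, N \in \Lambda\bil\fdmod$ there is a natural isomorphism
\[
\Hom_{\Gamma}\bigl((E,P)_{\Lambda},(E,N)_{\Lambda}\bigr) \cong (P,N)_{\Lambda},
\]
under which $\Hom_{\Gamma}((E,P)_{\Lambda},(E,f)_{\Lambda})$ corresponds to $(P,f)_{\Lambda}$. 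Moreover, since representable functors are projective in $\fun(\Lambda)$ (again Lemma \ref{lem:representable}), $(E,P)_{\Lambda}$ is a projective $\Gamma$-module, so $\Hom_{\Gamma}((E,P)_{\Lambda},-)$ is exact on the above presentation.

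Combining these two facts yields an identification
\[
\bigl((E,P)_{\Lambda},\alpha(f)\bigr)_{\Gamma} \;\cong\; \cok\bigl((P,f)_{\Lambda}\bigr).
\]
From here both directions are immediate. If $f$ is an epimorphism then, $P$ being projective, $(P,f)_{\Lambda}$ is surjective and the cokernel vanishes, giving the "only if" direction for every projective $P$. Conversely, assuming the cokernel vanishes for every projective $P$, it suffices to specialise to $P = {}_{\Lambda}\Lambda$; using the natural isomorphism $(\Lambda, -)_{\Lambda} \cong \id_{\Lambda\bil\fdmod}$, the surjectivity of $(\Lambda, f)_{\Lambda}$ is exactly the surjectivity of $f$.

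There is no substantive obstacle here — the only thing to be careful about is to verify that the Yoneda identification intertwines $(E,f)_{\Lambda}^{*}$ with $(P,f)_{\Lambda}$, which is a direct check on representables, and that $(E,P)_{\Lambda}$ is actually projective in $\Gamma\bil\fdmod$, which follows from Lemma \ref{lem:representable} combined with the equivalence $\fun(\Lambda) \simeq \Gamma\bil\fdmod$.
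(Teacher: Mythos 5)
Your proof is correct. The paper gives no proof of this lemma itself (it defers to Auslander--Reiten and to Section~6 of Ringel--Zhang), and your argument --- using projectivity of $(E,P)_{\Lambda}$ and the Yoneda/projectivization identification to compute $((E,P)_{\Lambda},\alpha(f))_{\Gamma} \cong \cok\bigl((P,f)_{\Lambda}\bigr)$, then specialising to $P={}_{\Lambda}\Lambda$ for the converse --- is exactly the standard argument those references employ.
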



\subsection{Relative Projective and Injective Objects of $\cS(\Lambda)$}
\label{subsec:frobenius}
The submodule category is additive and by the snake lemma it is closed under extensions. Thus it is an exact subcategory of $T_2(\Lambda) \bil \fdmod$.
The projective and injective objects of $T_2(\Lambda) \bil \fdmod$ are known, a classification can for example be found in \cite[Lemma 1.1]{XZZ2012}.
All projective $T_2(\Lambda)$-modules are a direct sum of modules of the form $(P \overset{\id}{\rightarrow} P)$ or $(0 \rightarrow P)$, where $P$ is a projective $\Lambda$-module.
In particular all projective $T_2(\Lambda)$-modules belong to $\cS(\Lambda)$, and they are the relative projective modules of that exact subcategory.

Dually, the injective $T_2(\Lambda)$-modules are a direct sum of modules of the form $(I \overset{\id}{\rightarrow} I)$ or $(I \rightarrow 0)$, where $I$ is an injective $\Lambda$-module.
The relative injective objects of $\cS(\Lambda)$ can be written as direct sums of objects of the form $(I \overset{\id}{\rightarrow} I)$ or $(0 \rightarrow I)$, with $I$ an injective $\Lambda$-module.

If additionally $\Lambda$ is self-injective, i.e. $\Lambda \bil \fdmod$ is a Frobenius category, the proposition below, found in \cite[Lemma 2.1]{Chen2011}, is an easy consequence.
\begin{prp}
\label{prp:frobenius}
Let $\Lambda$ be a self-injective algebra of finite representation type. 
Then $\cS(\Lambda)$ is a Frobenius category and the projective-injective objects are exactly those in $\add((\Lambda \overset{\id}{\rightarrow} \Lambda) \oplus (0 \rightarrow \Lambda))$.
\end{prp}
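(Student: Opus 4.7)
The plan is to verify the three Frobenius axioms for the exact category $\cS(\Lambda)$, leveraging everything already established in the paragraphs immediately preceding the proposition. The exact structure is inherited from $T_2(\Lambda) \bil \fdmod$, since $\cS(\Lambda)$ is closed under extensions via the snake lemma. The classifications recalled in the text say that the relative projective objects are exactly the projective $T_2(\Lambda)$-modules, i.e.\ direct sums of $(P \overset{\id}{\rightarrow} P)$ and $(0 \rightarrow P)$ for $P$ a projective $\Lambda$-module, and dually, the relative injectives are direct sums of $(I \overset{\id}{\rightarrow} I)$ and $(0 \rightarrow I)$ for $I$ an injective $\Lambda$-module.

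When $\Lambda$ is self-injective, projective and injective $\Lambda$-modules coincide and both equal $\add(\Lambda)$. Consequently the relative projectives and relative injectives of $\cS(\Lambda)$ coincide with $\add((\Lambda \overset{\id}{\rightarrow} \Lambda) \oplus (0 \rightarrow \Lambda))$, which simultaneously yields the description claimed in the proposition and the key Frobenius axiom that the classes of relative projectives and relative injectives agree.

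It remains to check that $\cS(\Lambda)$ has enough relative projectives and enough relative injectives. For enough projectives: pick a projective cover $P \twoheadrightarrow (M_1 \overset{f}{\rightarrow} M_0)$ in $T_2(\Lambda) \bil \fdmod$; since every indecomposable projective summand of $P$ has structure map either the identity or zero and $f$ is a monomorphism, a componentwise check shows the kernel is again a monomorphism, so it lies in $\cS(\Lambda)$ and the surjection is an admissible epimorphism. For enough injectives, which is the less immediate step and the main obstacle: given $(M_1 \overset{f}{\rightarrow} M_0) \in \cS(\Lambda)$, set $\bar M \coloneqq M_0/f(M_1)$, choose injective envelopes $I(M_1)$ and $I(\bar M)$ in $\Lambda \bil \fdmod$, use the injectivity of $I(M_1)$ to extend $M_1 \hookrightarrow I(M_1)$ along $f$ to a map $g \colon M_0 \rightarrow I(M_1)$, and pair $g$ with the natural map $M_0 \twoheadrightarrow \bar M \hookrightarrow I(\bar M)$ to obtain an embedding of $(M_1 \overset{f}{\rightarrow} M_0)$ into $(I(M_1) \overset{\id}{\rightarrow} I(M_1)) \oplus (0 \rightarrow I(\bar M))$. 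The work lies in verifying that this is an admissible monomorphism in $\cS(\Lambda)$, i.e.\ that the induced structure map on the cokernel is still injective; this reduces to the observation that an element of $M_0$ whose image in $I(\bar M)$ vanishes must already lie in $M_1 = f(M_1)$, so that the class of its first component in $I(M_1)/M_1$ is zero.
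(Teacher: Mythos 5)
Your proposal is correct and follows the same route the paper takes (the paper defers to the classification of relative projectives and injectives recalled in Subsection~2.2 and to \cite[Lemma 2.1]{Chen2011}, calling the statement an easy consequence): once projective and injective $\Lambda$-modules coincide, the two classes of relative projectives and relative injectives in $\cS(\Lambda)$ agree and equal $\add((\Lambda \overset{\id}{\rightarrow} \Lambda) \oplus (0 \rightarrow \Lambda))$. Your explicit verification of ``enough injectives'' via the embedding into $(I(M_1) \overset{\id}{\rightarrow} I(M_1)) \oplus (0 \rightarrow I(M_0/f(M_1)))$, including the check that the cokernel stays in $\cS(\Lambda)$, correctly fills in the detail the paper leaves implicit.
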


\begin{remark}
If $\Lambda$ is self-injective the submodule category $\cS(\Lambda)$ is precisely the full subcategory of Gorenstein projective $T_2(\Lambda)$-modules, cf. \cite[Theorem 1.1]{LZ2010}.
Thus $\eta$ is the inclusion of the Gorenstein projective modules of $T_2(\Lambda) \bil \fdmod$.
\end{remark}


\section{The Auslander Algebra of Self-injective Algebras}
\label{sec:self-inj}

In this section we fix $\Lambda$ as a finite-dimensional basic self-injective $k$-algebra of finite representation type.

Let $\nu \coloneqq D(-, {_{\Lambda}\Lambda})_{\Lambda}$ be the Nakayama functor on $\Lambda \bil \fdmod$. 
Its restriction to projective modules is an equivalence from the projective $\Lambda$-modules to the injective $\Lambda$-modules with inverse $\nu^{-1} \coloneqq (D(-), \Lambda_{\Lambda} )_{\Lambda}$.
Recall that $e$ denotes the idempotent of $\Gamma$ given by the opposite of the projection onto the summand $\Lambda$ of $E$.
Let $\Gamma e$ denote the left ideal generated by $e$. The following lemma describes the projective-injective objects of $\Gamma \bil \fdmod$ explicitly.

\begin{lemma}
\label{lem:proj-inj}
The projective-injective objects of $\Gamma \bil \fdmod$ are precisely the objects of $\add (\Gamma e)$.
Moreover $\Gamma e \cong (E,\Lambda)_{\Lambda} \cong D(\Lambda,E)_{\Lambda}$.
\end{lemma}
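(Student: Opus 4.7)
The plan is to prove the two module isomorphisms first, as they immediately exhibit $\Gamma e$ as projective-injective and hence establish one inclusion. The first, $\Gamma e \cong (E, \Lambda)_\Lambda$, is essentially unwinding definitions: since $\Gamma = \End_\Lambda(E)^{\op}$ and $e$ corresponds to the projection $E \twoheadrightarrow \Lambda \hookrightarrow E$, elements of $\Gamma e$ are endomorphisms of $E$ factoring through the summand $\Lambda$, giving an identification with $\Hom_\Lambda(E, \Lambda)$ as left $\Gamma$-modules. Via Lemma \ref{lem:representable} this corresponds to the representable functor $(-, \Lambda)_\Lambda$, so in particular $\Gamma e$ is projective.

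For the second isomorphism I would invoke the Auslander-Reiten formula $(E, \nu Q)_\Lambda \cong D(Q, E)_\Lambda$ term by term on the decomposition of $\Lambda$ into indecomposable projective summands. Since $\Lambda$ is self-injective, $\nu$ is an autoequivalence of $\Lambda \bil \fdmod$ permuting the indecomposable projective-injective summands. Writing each indecomposable summand $P$ of $\Lambda$ as $\nu Q$ and summing yields $(E, \Lambda)_\Lambda \cong D(\Lambda, E)_\Lambda$, where the target uses that $\nu^{-1}$ permutes the indecomposable projectives and $\Lambda$ is basic. By Lemma \ref{lem:corepresentable}, $\Gamma e$ is corepresentable, hence also injective, so $\add(\Gamma e)$ lies in the projective-injective part of $\Gamma \bil \fdmod$.

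For the reverse inclusion, let $X$ be an indecomposable projective-injective $\Gamma$-module. By Lemmas \ref{lem:representable} and \ref{lem:corepresentable}, $X \cong (E, M)_\Lambda \cong D(N, E)_\Lambda$ for some $M, N \in \ind(\Lambda)$; transporting to $\fun(\Lambda)$ produces an isomorphism of functors $(-, M)_\Lambda \cong D(N, -)_\Lambda$ on $\Lambda \bil \fdmod$. The left side is left exact contravariantly, so exactness of $D$ forces $\Hom_\Lambda(N, -)$ to be right exact and hence $N$ to be projective. The Auslander-Reiten formula then identifies $M \cong \nu N$, which is projective since $\nu$ permutes indecomposable projectives. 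Therefore $X = (E, M)$ is a summand of $(E, \Lambda) \cong \Gamma e$. The main obstacle is this converse step: recognizing that the exactness mismatch in the functor isomorphism $(-, M)_\Lambda \cong D(N, -)_\Lambda$ encodes the projectivity of $N$ is the crux of the argument, while everything else reduces to the functor-category description of $\Gamma \bil \fdmod$ and standard properties of the Nakayama functor.
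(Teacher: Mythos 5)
Your proof is correct. The identifications $\Gamma e \cong (E,\Lambda)_{\Lambda} \cong D(\Lambda,E)_{\Lambda}$ and the resulting inclusion $\add(\Gamma e)\subseteq\{\text{projective-injectives}\}$ are handled exactly as in the paper (the paper applies $D({_\Lambda}\Lambda,-)_\Lambda\cong(-,\nu\,{_\Lambda}\Lambda)_\Lambda$ globally rather than summand by summand, but this is the same computation, resting on $\nu\,{_\Lambda}\Lambda\cong{_\Lambda}\Lambda$ for a basic self-injective algebra). Where you diverge is the reverse inclusion. The paper argues via split monomorphisms: if $(E,M)_\Lambda$ is injective then every monomorphism $(E,M)_\Lambda\to(E,N)_\Lambda$ splits, and since $(E,-)_\Lambda$ is fully faithful and preserves monomorphisms, every monomorphism $M\to N$ in $\Lambda\bil\fdmod$ splits, so $M$ is injective and hence projective-injective. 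You instead use the functor-category classification on both sides: an indecomposable projective-injective is simultaneously representable, $(-,M)_\Lambda$, and corepresentable, $D(N,-)_\Lambda$, and the resulting isomorphism of functors is both left exact and carries the right-exactness coming through $D$, forcing $N$ projective and $M\cong\nu N$. Both arguments are sound; the paper's is slightly more elementary in that it only uses injectivity of $(E,M)_\Lambda$ and full faithfulness of $(E,-)_\Lambda$, while yours leans on the full strength of Lemma \ref{lem:corepresentable} together with the Nakayama functor, and as a by-product records the pairing $M\cong\nu N$ between the representing and corepresenting objects. (A small shortcut in your version: once $(-,M)_\Lambda$ is known to be exact you could conclude directly that $M$ is injective, hence projective-injective by self-injectivity, without routing through $N$.)
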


\begin{proof}
It is clear that $\Gamma e \cong (E,\Lambda)_{\Lambda}$.
Recall that there is an equivalence $D (_{\Lambda}\Lambda,-)_{\Lambda} \cong (-,\nu  _{\Lambda}\Lambda)_{\Lambda}$ and, since $\Lambda$ is self-injective, $\nu {_{\Lambda}\Lambda} =  {_{\Lambda}\Lambda}$.
Hence $(E,\Lambda)_{\Lambda} \cong D(\Lambda,E)_{\Lambda}$ and by lemmas \ref{lem:representable} and \ref{lem:corepresentable} it is a projective-injective module.

Let $(E,M)_{\Lambda}$ be a projective-injective $\Gamma$-module.
Then every monomorphism $(E,M)_{\Lambda} \rightarrow (E,N)_{\Lambda}$ is a split monomorphism, but that implies any monomorphism ${M \rightarrow N}$ in $\Lambda \bil \fdmod$ is a split monomorphism. Thus $M$ is a projective-injective $\Lambda$-module.
\end{proof}

\begin{rem}
This means the indecomposable projective-injective $\Gamma$-modules are the projective modules at vertices corresponding to indecomposable projective-injective $\Lambda$-modules, when we consider the Gabriel quiver of $\Gamma$ as the opposite of the Auslander-Reiten quiver of $\Lambda \bil \fdmod$.
\end{rem}


\subsection{Recollement}
\label{subsec:recollement}

Notice that $e \Gamma e = \End(\Lambda)^{\op} \cong \Lambda$,  hence $\Lambda$ embeds into $\Gamma$.
Let $\Gamma e \Gamma$ denote the two sided ideal of $\Gamma$ generated by $e$ and denote the quotient $\Gamma / \Gamma e \Gamma$ by $\stGamma$, we call this the \emph{stable Auslander algebra} of $\Lambda$. 

Consider the diagram
\[
\begin{tikzcd}
	\stGamma \bil \fdmod \arrow[rr,"\iota" description] 
	& & \arrow[ll,shift right=1.5ex,"q"'] \arrow[ll,shift left=1.5ex,"p"]  \Gamma \bil \fdmod \arrow[rr,"e" description] 
	& & \arrow[ll,shift right=1.5ex,"l"'] \arrow[ll,shift left=1.5ex,"r"] \Lambda \bil \fdmod
\end{tikzcd}
\]
of functors, where the functors are defined as follows:
\begin{align*}
	&q \coloneqq \Gamma / \Gamma e \Gamma \otimes_{\Gamma} -,  & l \coloneqq \Gamma e \otimes_{\Lambda} -,\\
	&\iota \coloneqq \text{Inclusion},  & e \coloneqq (\Gamma e,-)_{\Gamma},\\
	&p \coloneqq (\Gamma / \Gamma e \Gamma, -)_{\Gamma}, & r \coloneqq (e \Gamma,-)_{\Lambda}.
\end{align*}

This construction goes back to Cline,Parshall and Scott \cite{CPS1988,CPS1988-2}, and it gives a recollement of abelian categories.
By the definition of a recollement we have the following conditions:
\begin{enumerate}[(a)]
\item
The functor $l$ is a left adjoint of $e$ and $r$ is a right adjoint of $e$.
\item
The unit $\id_{\Lambda} \rightarrow el$ and the counit $er \rightarrow \id_{\Lambda}$ are isomorphisms.
\item
The functor $q$ is a left adjoint of $\iota$ and $p$ is a right adjoint of $\iota$.
\item
The unit $\id_{\stGamma} \rightarrow p \iota$ and the counit $q \iota \rightarrow \id_{\stGamma}$ are isomorphisms.
\item
The functor $\iota$ is an embedding onto the full subcategory $\ker(e)$.
\end{enumerate}

\begin{rem}
Since $\Lambda$ is self-injective, $\Gamma$ can be identified with the projective quotient algebra introduced in \cite[Section 5]{CS2017} and the recollement above is the same as the main recollement from \cite[Section 4]{CS2017}.
\end{rem}

We construct the \emph{intermediate extension functor} $c \colon \Lambda \bil \fdmod \rightarrow \Gamma \bil \fdmod$ as follows.
Since the counit $er \rightarrow \id_{\Lambda}$ is an isomorphism we have an inverse $\id_{\Lambda} \rightarrow er$.
If we apply the adjunction $(l,e)$ to the inverse we get a natural transformation $\gamma \colon l \rightarrow r$. 
Then we define $c \coloneqq \im(\gamma)$.

Let us recall the notions of tilting modules and cotilting modules. 
Let $X$ be a $\Gamma$-module, we call $X$ a \emph{tilting} module if the following hold:
\begin{enumerate}[(1)]
\item
The projective dimension of $X$ is at most 1.
\item
$X$ is rigid, i.e. $\ext_{\Gamma}^1(X,X) = 0$.
\item
$X$ has $n$ indecomposable summands where $n$ is the number of indecomposable direct summands of $\Gamma$.  
\end{enumerate}
Dually we say $X$ is \emph{cotilting} if it satisfies (2) and (3) and has injective dimension at most 1.

We say a module $M$ in $A \bil \fdmod$ is \emph{generated} by $N$ if there exists an epimorphism ${N^n \twoheadrightarrow M}$ for some $n \in \N$.
Dually we say $M$ is \emph{cogenerated} by $N$ if there is a monomorphism ${M \hookrightarrow N^n}$ for some $n \in \N$.
We denote by $\gen(N)$ (resp. $\cog(N)$) the full subcategory of modules generated by $N$ (resp. cogenerated by $N$ ).

Consider the $\Gamma$-module $T \coloneqq c(E)$.

\begin{lemma}
\label{lem:cog=tol}
The module $T$ is a tilting and cotilting module. Moreover the following conditions hold.
\begin{enumerate}[$(i)$]
\item
The kernel of $p$ is $\ker(p) = \cog(T) = \Gamma \bil \tol$.
\item
The kernel of $q$ is $ker(q) = \gen(T) = \Gamma \bil \divbl$.
\end{enumerate}
\end{lemma}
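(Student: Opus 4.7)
I would prove the lemma in three movements.

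First, I would unpack the construction of $T$. By definition $T = c(E) = \im(\gamma_E)$, so it fits into two short exact sequences $0 \to K \to l(E) \to T \to 0$ and $0 \to T \to r(E) \to C \to 0$, where both $K = \ker(\gamma_E)$ and $C = \cok(\gamma_E)$ lie in $\iota(\stGamma\bil\fdmod) = \ker(e)$; this is a standard property of the intermediate extension in a recollement arising from an idempotent. I would then identify the outer terms: $r(E) = (E,E)_{\Lambda} \cong \Gamma$ as a left $\Gamma$-module (since $\End_{\Lambda}(E)$ carries the regular $\Gamma$-action via precomposition), and applying the right-exact functor $l$ to a projective presentation $\Lambda^a \to \Lambda^b \to E \to 0$ of the $\Lambda$-module $E$ exhibits $l(E)$ as a quotient of $l(\Lambda)^b = (\Gamma e)^b$, which is projective-injective by Lemma~\ref{lem:proj-inj}.

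From these descriptions the homological bounds and the cog/gen classes follow quickly. The embedding $T \hookrightarrow \Gamma$ realizes $T$ as torsionless, so $T \in \Gamma\bil\tol$ and $\mathrm{pd}\,T \le 1$ by Proposition~\ref{prp:torsionless}. Since $T$ is a quotient of $l(E)$, which is itself a quotient of a projective-injective module, $T$ is divisible, whence $\mathrm{id}\,T \le 1$ by Proposition~\ref{prp:divisable}. The inclusion $\cog(T) \subseteq \Gamma\bil\tol$ is immediate; conversely, any $X \in \Gamma\bil\tol$ has its injective envelope in $\add(\Gamma e)$, and $\Gamma e = c(\Lambda)$ is a direct summand of $T$ (since $\gamma_{\Lambda}$ is an isomorphism), so $X \in \cog(T)$. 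The dual argument using Proposition~\ref{prp:divisable} gives $\gen(T) = \Gamma\bil\divbl$. The summand count is forced by the full faithfulness of $c$ together with $ec \cong \id_{\Lambda}$: $T = c(E)$ has $m$ pairwise non-isomorphic indecomposable summands, matching the number of indecomposable summands of $\Gamma$ by the remark following Lemma~\ref{lem:proj-inj}.

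For the kernel identifications I would analyse $p$ and $q$ via socles and tops. A module $X$ lies in $\ker(p)$ iff it admits no nonzero submodule in $\ker(e)$, equivalently, every simple summand of $\soc(X)$ satisfies $eS \neq 0$; by Lemma~\ref{lem:proj-inj} together with the Nakayama permutation on the projective-injective $\Gamma$-modules, these simples are precisely those appearing in $\soc(\Gamma e)$, so $\ker(p)$ consists of the modules whose injective envelope lies in $\add(\Gamma e)$, which by Proposition~\ref{prp:torsionless} is $\Gamma\bil\tol$. The dual argument on tops, using Proposition~\ref{prp:divisable}, gives $\ker(q) = \Gamma\bil\divbl$. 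Finally, the rigidity $\ext^1_{\Gamma}(T,T) = 0$ I would obtain by applying $\Hom_{\Gamma}(T,-)$ to $0 \to T \to r(E) \to C \to 0$: since $T$ admits no nonzero quotient in $\ker(e)$, $\Hom_{\Gamma}(T,C) = 0$, so $\ext^1_{\Gamma}(T,T)$ injects into $\ext^1_{\Gamma}(T, r(E)) = \ext^1_{\Gamma}(T, \Gamma)$; this last vanishing, which genuinely uses the self-injectivity of $\Lambda$, is the main technical obstacle, to be handled by a hands-on analysis of the minimal projective resolution of $T$ inside the recollement, or by invoking the Chen--Solberg projective-quotient-algebra framework cited in the paper.
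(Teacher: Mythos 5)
Your overall strategy diverges from the paper's in an instructive way. The paper disposes of most of the statement with a single citation: it quotes \cite[Section 5]{CS2017} both for $T$ being tilting and cotilting and for the identities $\ker(p)=\cog(T)$, $\ker(q)=\gen(T)$, and then only proves $\cog(T)=\Gamma\bil\tol$ (a tilting module cogenerates the projectives, hence $\Gamma\bil\tol\subseteq\cog(T)$; and $\mathrm{pd}\,T\le 1$ makes $T$ torsionless, hence $\cog(T)\subseteq\Gamma\bil\tol$), with the dual argument for $\gen(T)$. Your replacements for the cited facts are mostly sound and more self-contained: the identifications $r(E)\cong\Gamma$ and $l(E)\in\gen(\Gamma e)$ do give $\mathrm{pd}\,T\le 1$ and $\mathrm{id}\,T\le 1$ via Propositions~\ref{prp:torsionless} and~\ref{prp:divisable}; your proof of $\Gamma\bil\tol\subseteq\cog(T)$ via injective envelopes and $c(\Lambda)\cong\Gamma e$ avoids the coresolution $0\to\Gamma\to T^0\to T^1\to 0$ that the paper's argument implicitly invokes; and the socle/top analysis of $\ker(p)$ and $\ker(q)$ is a genuinely independent proof of the kernel identifications (it correctly uses that, since $\Lambda$ is self-injective, the simples $S$ with $eS\neq 0$ are exactly those occurring in $\soc(\Gamma e)$).

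The genuine gap is exactly where you flag it: $\ext^1_{\Gamma}(T,T)=0$. Your reduction to $\ext^1_{\Gamma}(T,\Gamma)=0$ is fine ($\Hom_{\Gamma}(T,C)=0$ because $q(T)=0$, as $T$ is a quotient of $l(E)$ and $ql=0$), but that last vanishing does not follow from dominant dimension $\geq 2$ alone and cannot be waved at. It can be closed within your framework: $T=c(E)$ is concretely the ideal $\Gamma e\Gamma\subseteq\Gamma$, i.e.\ the maps $E\to E$ factoring through $\add(\Lambda)$, and having projective dimension $\le 1$ it admits a resolution $0\to(E,M_1)_{\Lambda}\to(E,M_0)_{\Lambda}\to T\to 0$ induced by a monomorphism $f\colon M_1\to M_0$; a map $(E,M_1)_{\Lambda}\to T$ corresponds to a map $M_1\to E$ factoring through a projective $=$ injective $\Lambda$-module, hence extends along $f$, so $\Hom((E,M_0)_{\Lambda},T)\to\Hom((E,M_1)_{\Lambda},T)$ is surjective and $\ext^1_{\Gamma}(T,T)=0$. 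Absent such an argument you must, like the paper, simply cite \cite{CS2017} --- which, incidentally, is Crawley-Boevey and Sauter, not ``Chen--Solberg''. Two smaller points: the summand count needs the (standard, but not free) fact that the intermediate extension $c$ preserves indecomposability and reflects isomorphisms; and the identification $r(E)\cong\Gamma$ deserves a line, since it amounts to $\Gamma\cong re(\Gamma)$, i.e.\ to the dominant dimension of $\Gamma$ being at least $2$.
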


\begin{proof}
To see that $T$ is a tilting and cotilting module we refer to \cite[Section 5]{CS2017}. 
There it is also shown that $\ker(p) = \cog(T)$ and $\ker(q) = \gen(T)$.
Since $T$ is tilting, all projective $\Gamma$-modules are in $\cog(T)$. 
Hence $\Gamma \bil \tol$ is contained in $\cog(T)$.
Since $T$ is a tilting module it is of projective dimension at most 1 and hence torsionless by Proposition \ref{prp:torsionless}.
But $\Gamma \bil \tol$ is closed under taking submodules, thus $\cog(T) \subset \Gamma \bil \tol$. This proves $(i)$, the proof of $\gen(T) = \Gamma \bil \divbl$ goes dually.
\end{proof}


\subsection{The Stable Auslander Algebra}

The algebra $\stGamma$ has an alternative description. 
Notice that $\Gamma e \Gamma \subset \Gamma$ is given by all maps in $\End(E)^{\op}$ that factor through a projective-injective $\Lambda$-module. 
Therefore $\stGamma = \underline{\End}_{\Lambda}(E)^{\op}$, the opposite of the endomorphism ring of $E$ in the stable category $\Lambda \bil \stmod$.
Thus $\stGamma \bil \fdmod$ is equivalent to the category of finitely presented additive functors from $(\Lambda \bil \stmod)^{\op}$ to $k$-vector spaces.

The following proposition is classical.
It follows from \cite[Theorem 1.7]{freyd1966} and the fact that every map in a triangulated category is a weak kernel and weak cokernel. 
\begin{prp}[Freyd's Theorem]
\label{prp:tri-frob}
Let $\cT$ be a triangulated category. Then $\fun(\cT)$ is a Frobenius category.
\end{prp}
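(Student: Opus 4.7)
The plan is to verify the three ingredients of a Frobenius structure on \(\fun(\cT)\): that it is an exact (in fact abelian) category, that it has enough projectives and enough injectives, and that these two classes coincide. The abelian part I would take essentially for free from Freyd's theorem: it requires only that \(\cT\) admit weak kernels, and every morphism \(f \colon X \to Y\) in \(\cT\) has the fibre \(W \to X\) from a triangle \(W \to X \xrightarrow{f} Y \to W[1]\) as such a weak kernel. The Yoneda lemma (cf.\ Lemma \ref{lem:representable}) shows that each representable \((-,M)_{\cT}\) is projective, and every finitely presented functor admits a presentation by representables by definition, so enough projectives are present at the outset.

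The core of the argument lies in two further claims: every \(F \in \fun(\cT)\) embeds into a representable, and every representable is injective. For the embedding, I would take a presentation \((-,X)_{\cT} \xrightarrow{f_{*}} (-,Y)_{\cT} \to F \to 0\) coming from \(f \colon X \to Y\), complete \(f\) to a triangle \(X \xrightarrow{f} Y \xrightarrow{g} Z \to X[1]\), and observe that the triangulated long exact sequence for \(\Hom_{\cT}(N,-)\) identifies the image of \(g_{*}\) with the cokernel of \(f_{*}\) pointwise in \(N\). Hence \(F\) sits as a subfunctor of \((-,Z)_{\cT}\); combined with injectivity of representables this will supply enough injectives.

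For injectivity of \((-,M)_{\cT}\), I would compute \(\ext^{1}(G,(-,M)_{\cT}) = 0\) for any finitely presented \(G\). Starting from a presentation coming from \(a \colon A \to B\) and completing to a triangle \(A \xrightarrow{a} B \xrightarrow{b} C \xrightarrow{c} A[1]\), iterating the weak-kernel construction (i.e.\ rotating the triangle) builds a projective resolution whose first terms are \((-,C[-1])_{\cT} \to (-,A)_{\cT} \to (-,B)_{\cT}\). Applying \(\Hom_{\fun(\cT)}(-,(-,M)_{\cT})\) and invoking Yoneda converts the relevant stretch into
\[
(B,M)_{\cT} \xrightarrow{a^{*}} (A,M)_{\cT} \xrightarrow{c[-1]^{*}} (C[-1],M)_{\cT},
\]
whose exactness at the middle term is precisely one instance of the contravariant long exact sequence attached to the triangle, giving the vanishing.

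The hard part I expect is exactly this injectivity computation: it requires honestly setting up the representable resolution by iterated triangle completion and matching the Ext vanishing against the triangulated long exact sequence. Once that is in hand, the projective and the injective objects of \(\fun(\cT)\) both coincide with \(\add\) of representables and each class admits enough objects, so \(\fun(\cT)\) is Frobenius.
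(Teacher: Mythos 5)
Your argument is correct, and it is in substance a full proof of the black box that the paper leaves to the literature: the paper's own ``proof'' is a one-line citation of Freyd's Theorem 1.7, which says that $\fun(\cA)$ is Frobenius for an additive category $\cA$ with weak kernels in which every morphism is both a weak kernel and a weak cokernel, together with the observation that triangulated categories satisfy this hypothesis via rotation of triangles. You instead verify everything directly in the triangulated setting, and the two key computations you isolate are exactly right: embedding $F=\cok(f_*)$ into $(-,Z)_{\cT}$ via the cone of $f$ (using exactness of $\Hom_{\cT}(N,X)\to\Hom_{\cT}(N,Y)\to\Hom_{\cT}(N,Z)$), and injectivity of representables via $\ext^1(G,(-,M)_{\cT})=0$, read off from the contravariant long exact sequence applied to the rotated triangle $C[-1]\to A\to B$ after Yoneda identifies $\Hom_{\fun(\cT)}((-,X)_{\cT},(-,M)_{\cT})$ with $(X,M)_{\cT}$. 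The remaining bookkeeping (projectives are summands of representables; an injective embeds split-monically into a representable, hence is also such a summand; therefore the two classes coincide and both are plentiful) is implicit but unproblematic. What your route buys is self-containedness and an explicit description of the projective-injective objects as $\add$ of the representables; what the paper's route buys is brevity and the sharper ``if and only if'' form of Freyd's criterion, which is not needed here.
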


Since $\Lambda$ is self-injective, $\Lambda \bil \stmod$ is a triangulated category.
Hence the following corollary.

\begin{cor}
\label{cor:Pi-frob}
The category $\stGamma \bil \fdmod$ is a Frobenius category.
\end{cor}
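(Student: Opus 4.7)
The plan is to reduce the statement to Freyd's Theorem (Proposition \ref{prp:tri-frob}) via the functor-category description of $\stGamma \bil \fdmod$ given just before the corollary. Concretely, I would argue in three short steps.

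First, I would invoke the classical result of Heller/Happel that for a self-injective algebra $\Lambda$, the stable module category $\Lambda \bil \stmod$ carries a natural triangulated structure, with suspension given by the cosyzygy $\Omega^{-1}$ and distinguished triangles induced by short exact sequences in $\Lambda \bil \fdmod$. Since the standing assumption in this section is that $\Lambda$ is self-injective, this step is immediate.

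Second, I would apply Proposition \ref{prp:tri-frob} to $\cT \coloneqq \Lambda \bil \stmod$ to conclude that $\fun(\Lambda \bil \stmod)$ is a Frobenius category. Finally, I would appeal to the equivalence
\[
\stGamma \bil \fdmod \;\simeq\; \fun(\Lambda \bil \stmod)
\]
recorded in the paragraph introducing the stable Auslander algebra, which uses that $\stGamma \cong \underline{\End}_{\Lambda}(E)^{\op}$ together with the same argument that identifies $\fun(\Lambda) \simeq \Gamma \bil \fdmod$ (the cited result of Gabriel, \cite[Chapitre II, Proposition 2]{gabriel1962}), now applied to the Auslander generator $E$ viewed in the stable category. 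Transporting the Frobenius structure along this equivalence gives the claim.

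There is no serious obstacle; the only point that deserves a sentence of justification is the last one, namely that the Gabriel-type equivalence still applies when $\Lambda \bil \stmod$ replaces $\Lambda \bil \fdmod$. This works because $E$ remains an additive generator of $\Lambda \bil \stmod$ (its indecomposable non-projective summands exhaust the indecomposables of the stable category), so evaluating a finitely presented functor on $E$ and its endomorphisms in $\Lambda \bil \stmod$ recovers it completely, and this evaluation lands in $\stGamma \bil \fdmod$ by the definition of $\stGamma$. With this identification in place, Frobenius-ness of $\stGamma \bil \fdmod$ is inherited verbatim from Freyd's Theorem.
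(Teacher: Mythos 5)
Your proposal is correct and follows exactly the route the paper takes: identify $\stGamma \bil \fdmod$ with $\fun(\Lambda \bil \stmod)$ via $\stGamma \cong \underline{\End}_{\Lambda}(E)^{\op}$, note that $\Lambda \bil \stmod$ is triangulated since $\Lambda$ is self-injective, and apply Proposition \ref{prp:tri-frob}. Your extra sentence justifying that the Gabriel-type equivalence still applies in the stable category is a welcome bit of detail the paper leaves implicit, but the argument is the same.
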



\section{From Submodule Categories to Representations of the Stable Auslander Algebra}
\label{sec:from-to}

Here we follow the story of \cite{RZ2014} in a more general setting for any basic self-injective algebra $\Lambda$ of finite representation type.
We have already studied the functor $\alpha \eta \colon \cS(\Lambda) \rightarrow \Gamma \bil \fdmod $ in Section \ref{sec:subm} and $q \colon \Gamma \bil \fdmod \rightarrow \stGamma \bil \fdmod$ in Section \ref{sec:self-inj}.
We use what we have gathered about those functors to study the compositions
\[
\begin{tikzcd}
	\cS(\Lambda) \ar[r,shift left=1ex,"\eta"] \ar[r,shift right=1ex,"\epsilon"']
	& T_2(\Lambda) \bil \fdmod \ar[r,"\alpha"] 
	& \Gamma \bil \fdmod \ar[r,"q"] 
	& \stGamma \bil \fdmod.
\end{tikzcd}
\]
The functors $F$ and $G$ are given by 
$F \coloneqq q  \alpha \eta$ and $G\coloneqq q \alpha \epsilon$.
The functor $F$ was already studied by Li and Zhang in \cite{LZ2010}.
In \cite{AR1976} Auslander and Reiten considered the composition $G$, based on previous work by Gabriel \cite{gabriel1962}.


\subsection{Induced Equivalences}
\label{subsec:ind-equ}

We have already established that $\eta$ and $\alpha$ as well as the compositions $\alpha \eta$ and $\alpha \epsilon$ are objective.
In corollary \ref{cor:subm} we established that the essential image of $\alpha \eta$ is $\Gamma \bil \tol$. 
For now we shall consider $\alpha \eta$ as a functor to $\Gamma \bil \tol$.
Moreover we write $q_t$ for the restriction of $q$ to $\Gamma \bil \tol$.

\begin{prp}
\label{prp:equ-tol-pi}
The functor $q_t$ is objective, i.e. it induces an equivalence from $\Gamma \bil \tol/ \add(T)$ to $\stGamma \bil \fdmod$.
\end{prp}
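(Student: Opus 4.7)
The plan is to verify in turn that $q_t$ has kernel $\add(T)$, is dense, is full, and is faithful modulo its kernel; these four facts together prove that $q_t$ is objective and descends to the claimed equivalence. The starting point is that $q = \Gamma/\Gamma e \Gamma \otimes_\Gamma -$ is exact as part of the recollement of abelian categories, with Serre kernel $\gen(T) = \Gamma\bil\divbl$ by Lemma \ref{lem:cog=tol}(ii); restricting to $\Gamma\bil\tol$ therefore yields $\ker(q_t) = \Gamma\bil\tol \cap \gen(T)$.

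The first and most delicate step is the identification $\Gamma\bil\tol \cap \gen(T) = \add(T)$. The inclusion $\add(T) \subset \Gamma\bil\tol \cap \gen(T)$ is immediate since $T$ is both tilting and cotilting. For the reverse inclusion, I rely on the description of $T = c(E)$ as the intermediate extension in the recollement: the essential image of $c$ equals $\add(T)$ and coincides precisely with $\gen(T) \cap \cog(T) = \Gamma\bil\divbl \cap \Gamma\bil\tol$, as developed in the recollement--tilting framework of \cite{CS2017}.

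For density, given $N \in \stGamma \bil \fdmod$ with projective presentation $\stGamma^{a_1} \xrightarrow{f} \stGamma^{a_0} \to N \to 0$, choose projective $\Gamma$-modules $Q_i$ with $q(Q_i) \cong \stGamma^{a_i}$ (direct sums of $\Gamma e_j$'s with $e_j \notin \Gamma e \Gamma$) and lift $f$ to some $\tilde f \colon Q_1 \to Q_0$. The key trick is that the kernel $K = \ker(\tilde f)$ is a submodule of the projective $Q_1$, hence torsionless, so by Proposition \ref{prp:torsionless}(ii) its injective envelope $I(K)$ is projective-injective and lies in $\add(\Gamma e) \subseteq \ker(q)$. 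Extending the inclusion $K \hookrightarrow I(K)$ via injectivity to $\phi \colon Q_1 \to I(K)$, the modified map $\tilde f' = (\tilde f, \phi) \colon Q_1 \to Q_0 \oplus I(K)$ is injective and still satisfies $q(\tilde f') = f$, so $M \coloneqq \cok(\tilde f')$ lies in $\Gamma\bil\tol$ and $q(M) \cong N$.

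For the fully-faithful part, for $X, Y \in \Gamma\bil\tol$ the short exact sequence $0 \to (\Gamma e \Gamma) Y \to Y \to \iota q Y \to 0$ identifies $\Hom_{\stGamma}(qX, qY)$ with $\Hom_{\Gamma}(X, \iota q Y)$. The submodule $(\Gamma e \Gamma) Y$ lies in $\Gamma\bil\tol$ (as a submodule of $Y$) and in $\gen(T)$ (as a quotient of $l(eY) \in \add(\Gamma e) \subset \add(T)$), hence in $\add(T)$ by step one; since $T$ is cotilting of injective dimension $\leq 1$ we have $\ext^1_{\Gamma}(X, T) = 0$ for $X \in \cog(T) = \Gamma\bil\tol$, which gives $\ext^1_{\Gamma}(X, (\Gamma e \Gamma) Y) = 0$ and hence the surjection $\Hom_{\Gamma}(X, Y) \twoheadrightarrow \Hom_{\Gamma}(X, \iota q Y)$, proving $q_t$ full. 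Faithfulness modulo the kernel is then immediate: if $q(f) = 0$ then exactness of $q$ places $\im(f) \in \gen(T)$, and $\im(f) \subset Y$ places it in $\Gamma\bil\tol$, so $\im(f) \in \add(T)$ and $f$ factors through $\add(T)$. The hardest part will be step one; everything else is formal given it.
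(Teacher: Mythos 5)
Your proposal reaches the correct conclusion and its overall structure (kernel, density, fullness, faithfulness) matches the paper's, but the density step is genuinely different and the faithfulness step contains a misstatement worth flagging. For density, the paper starts from $X=\iota(\underline{X})$, embeds it into its injective envelope, takes a projective cover of that envelope and pulls back, obtaining a torsionless $Y$ with a kernel of injective dimension $\leq 1$ killed by $q$. You instead lift a projective presentation of $N$ over $\stGamma$ to a map of projective $\Gamma$-modules and correct it by the injective envelope of its kernel so as to make it injective; the cokernel then has projective dimension $\leq 1$ and maps to $N$ under $q$. Both arguments rest on the same two ingredients (dominant dimension $\geq 2$ via Proposition \ref{prp:torsionless} and Lemma \ref{lem:proj-inj}, together with $q(\Gamma e)=0$), but yours is more constructive in that it produces a preimage together with its projective resolution. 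Your kernel identification and fullness argument coincide with the paper's: both reduce to $\gen(T)\cap\cog(T)=\add(T)$ from \cite{CS2017} and to $\ext^1_{\Gamma}(X,K)=0$ for $X\in\cog(T)$ and $K\in\add(T)$.

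The one genuine error is the opening claim that $q=\stGamma\otimes_{\Gamma}-$ is exact with Serre kernel $\gen(T)$: in a recollement $q$ is only a left adjoint, hence right exact, and $\ker(q)=\gen(T)$ is a torsion class, not a Serre subcategory (it is not closed under submodules; already for $\Lambda=k[x]/\langle x^2\rangle$ one has $\rad(\Gamma e)\cong P(k)\notin\gen(\Gamma e)$, and correspondingly $\tor_1^{\Gamma}(\stGamma,S(\Lambda))\neq 0$). This only damages your faithfulness step, where you infer $\im(f)\in\gen(T)$ from $q(f)=0$. The repair is immediate from material you already have: $q(f)=0$ means precisely that $f(X)\subseteq\Gamma e\Gamma Y=\ker(Y\to\iota qY)$, and in your fullness step you showed this submodule lies in $\add(T)$; hence $f$ factors through $\add(T)$. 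This corrected argument is exactly the one the paper gives.
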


\begin{proof}
Since $\ker q = \gen(T)$ and $\Gamma \bil \tol = \cog(T)$ we know that the kernel of $q_t$ is $\cog(T) \cap \gen(T) = \add(T)$.
First we show that the induced functor ${\Gamma \bil \tol / \add(T) \rightarrow \stGamma \bil \fdmod}$ is faithful.
By \cite[Proposition 4.2]{FP2004} there is an exact sequence of functors
\[
\begin{tikzcd}
le \rar["\psi"] & \id_{\Gamma} \ar[r,"\phi"] & \iota q \rar & 0.
\end{tikzcd}
\]
Let $Y \in \Gamma \bil \tol$, there is an epimorphism $\phi_Y \colon Y \rightarrow \iota q(Y)$ and the morphism ${\psi_{Y} \colon l e (Y) \rightarrow Y}$ factors through $\ker(\phi_Y)$ via an epimorphism, in particular $\ker(\phi_Y)$ is in $\gen(T)$.
Since $\ker(\phi_Y)$ is a submodule of $Y$ it belongs to $\cog(T)$, thus $\ker(\phi_Y) \in \add(T)$. 
Now let $f \colon X \rightarrow Y$ be a morphism in $\Gamma \bil \tol$ such that $q_t(f) = 0$. 
Then $\phi_Y  f = 0$ and thus $f$ factors through $\ker(\phi_Y)$.

To show $q_t$ is full we consider the adjoint pair $(q,\iota)$. Let $X,Y \in \Gamma \bil \tol$, we want to show the map $q_{XY}$ induced by the functor $q$ in the following sequence is surjective.
\[
\begin{tikzcd}
	(X,Y)_{\Gamma} \ar[r,"q_{XY}"] & (q X, q Y)_{\stGamma} \ar[r,"\Phi"] & (X,\iota q Y)_{\Gamma}.
\end{tikzcd}
\]
Here $\Phi$ is the isomorphism given by the adjunction.
Let $\phi_Y \coloneqq \Phi(\id_{q Y})$, by \cite[Proposition 4.2]{FP2004} this is an epimorphism, so we get an exact sequence
\[
\begin{tikzcd}
	0 \rar & K \rar & Y \ar[r,"\phi_Y"] & \iota q Y \rar & 0.
\end{tikzcd}
\]
By our argument above we know $K \in \add(T)$. 
Apply $(X,-)_{\Gamma}$ to the exact sequence above and get the exact sequence
\[
\begin{tikzcd}[column sep = 2.5em]
	0 \rar & (X,K)_{\Gamma} \rar & (X,Y)_{\Gamma} \ar[r,"{(X,\phi_Y)}"] & (X, \iota q Y)_{\Gamma} \rar & \ext_{\Gamma}^1(X,K). 
\end{tikzcd}
\]
Since $X \in \cog(T)$  and $K \in \add (T)$ we have $\ext^1(X,K) = 0$, thus $(X,\phi_Y) = \Phi q_{XY}$ is an epimorphism. 
Since $\Phi$ is an isomorphism that implies $q_{XY}$ is an epimorphism.

To show denseness we adapt the proof of \cite[Proposition 5]{RZ2014}.
Let $\underline{X} \in \stGamma \bil \fdmod$ and write $X \coloneqq \iota(\underline{X})$. 
We let $u \colon X \rightarrow I(X)$ be the injective envelope and $p \colon PI(X) \rightarrow I(X)$ be a projective cover with kernel $K$.
We get an induced diagram
\[
\begin{tikzcd}
0 \rar & K \ar[r,"v'"] \ar[xshift=0.3ex,d,-] \ar[xshift=-0.3ex,d,-]& Y \ar[r,"p'"] \ar[d,"u'"] & X \rar \ar[d,"u"] & 0\\
0 \rar & K \ar[r,"v"] & PI(X) \ar[r,"p"] & I(X) \rar & 0.
\end{tikzcd}
\]
Since $u'$ is a monomorphism, $Y$ embeds into a projective-injective $\Gamma$-module, and hence $Y \in \Gamma \bil \tol$.
Moreover $K$ has injective dimension at most 1, hence $q_t(K) = 0$.
By the defining properties of a recollement we have $q_t(X) \cong q \iota(\underline{X}) \cong \underline{X}$, and $q_t$ is right-exact because $q$ is a left adjoint. 
It follows that $q_t(Y) \cong q_t(X) \cong \underline{X}$.
We have shown that $q_t$ is dense.
\end{proof}

We know $\alpha \eta$ is objective and dense when considered as a functor to $\Gamma \bil \tol$ and thus the functor $F$ is objective.
Moreover $F$ is full and dense because $q_t$ and $\alpha \eta$ are full and dense.

Let $f \in T_2(\Lambda) \bil \fdmod$, then Lemmas \ref{lem:epim} and \ref{lem:proj-inj} imply that $f$ is an epimorphism if and only if
$e(\alpha(f)) = (\Gamma e,\alpha(f))_{\Gamma} = 0$.
This means the essential image of $\alpha \epsilon$ is $\ker(e)$,
but we can identify $\ker(e)$ with $\stGamma \bil \fdmod$ via $\iota$.
We have established $\alpha \epsilon$ is an objective functor so we conclude $G$ is objective.
The functor $G$ is also full and dense because $\alpha \epsilon$ is full and dense when considered as a functor to $\ker (e)$.

Now we can prove our first main theorem.

\induced*

\begin{proof}
The indecomposable objects of $\cV$ are $(M \overset{\id}{\rightarrow} M)$ and $(0 \rightarrow M)$ for any $M \in \ind(\Lambda)$. Hence $\cV$ clearly has $2m$ indecomposable objects up to isomorphism.
The indecomposable objects of $\cU$ are $(M \overset{\id}{\rightarrow} M)$ and the injective envelope $(M \rightarrow I(M))$ for each  $M \in \ind(\Lambda)$, as well as the objects $(0 \rightarrow I)$ for each injective object $I \in \ind(\Lambda)$.
Since the objects $(I \overset{\id}{\rightarrow} I)$ for $I \in \ind(\Lambda)$ injective appear twice in this list, $\cU$ has $2m$ indecomposable objects up to isomorphism.

Next we prove $(i)$. 
Let $(M \overset{f}{\rightarrow} N) \in \cS(\Lambda)$, and assume $F(f) = 0$. 
Consider the diagram
\[
\begin{tikzcd}[row sep = 1.5em]
0 \ar[r] & P \ar[r,"\sim"] \ar[d] & P \ar[r] \ar[d] & 0 \ar[d] &\\
0 \ar[r] & (E,M)_{\Lambda} \ar[r,"{(E,f)_{\Lambda}}"] \ar[d] & (E,N)_{\Lambda} \ar[r] \ar[d] 
& \alpha(f) \ar[xshift=0.3ex,d,-] \ar[xshift=-0.3ex,d,-] \ar[r] & 0 \\
0 \ar[r] & P_1 \ar[r,"p_1"] & P_0 \ar[r] & \alpha(f) \ar[r] & 0.
\end{tikzcd}
\]
Here the bottom row is a minimal projective resolution and all rows and columns are exact, thus $P$ is projective and the first two columns are split exact sequences of projective modules.
Since $\Lambda \bil \fdmod$ is equivalent to the full subcategory of projective $\Gamma$-modules this shows $f$ is a direct sum of an isomorphism $(M' \overset{f'}{\rightarrow} N')$, corresponding to $P \simeq P$, and a monomorphism $(M'' \overset{f''}{\rightarrow} N'')$, corresponding to the map $p_1$. 
Now $F(f) = q(\alpha(f)) = 0$ if and only if $P_0$ is projective-injective by Lemma \ref{lem:cog=tol}, which is if and only if $N''$ is projective-injective by Lemma \ref{lem:proj-inj}.

We already know $F$ is objective, and thus the functor $\cS(\Lambda)/ \cU \rightarrow \stGamma \bil \fdmod$ induced by $F$ is faithful.  
Since $F$ is full and dense the induced functor is also full and dense. 

Now to $(ii)$.
The kernel of $\alpha$ is  $\add( ( E \overset{\id}{\rightarrow} E) \oplus (E \rightarrow 0))$.
But  
\[\epsilon(( E \overset{\id}{\rightarrow} E) \oplus (0 \rightarrow E)) = ( E \overset{\id}{\rightarrow} E) \oplus (E \rightarrow 0),
\] 
hence $\cV = \add(( E \overset{\id}{\rightarrow} E) \oplus (0 \rightarrow E)) = \ker (\alpha \epsilon)$. Moreover the restriction of $q$ to the essential image $\ker(e)$ of $\alpha \epsilon$ is an equivalence by the defining properties of a recollement. 
We have shown $G$ is full, dense and objective, thus $(ii)$ holds.
\end{proof}


\subsection{Interplay with Triangulated Structure}
\label{subsec:triangulated}

We have already established that the categories $\cS(\Lambda)$ and $\stGamma \bil \fdmod$ are Frobenius categories.
Then it is natural to ask whether the triangulated structure of the stable category $\stGamma \bil \stmod$ interacts nicely with that functors $F$ and $G$.

Let $\pi \colon \stGamma \bil \fdmod \rightarrow \stGamma \bil \stmod$ be the projection to the stable category.
We denote the syzygy functor on $\stGamma \bil \stmod$ by $\Omega$. 
The following was proven in a special case in \cite[Section 7]{RZ2014}, and we prove this more general statement analogously.

\difference*

\begin{proof}
Let $(L \overset{f}{\rightarrow} M)$ be an object in $\cS(\Lambda)$.
We have the corresponding exact sequence
\[
\begin{tikzcd}
0 \rar & L \ar[r,"f"] & M \ar[r,"g"] & N \rar & 0.
\end{tikzcd}
\]
Notice that $g = \epsilon(f)$.
Apply $(E,-)_{\Lambda}$ to this sequence and obtain the exact sequence
\[
\begin{tikzcd}
0 \rar & (E,L)_{\Lambda} \ar[r,"{(E,f)}"] & (E,M)_{\Lambda} \ar[r,"{(E,g)}"] & (E,N)_{\Lambda} .
\end{tikzcd}
\]
The cokernel of $(E,f)$, and hence the image of $(E,g)$, is by definition $\alpha \eta (f)$.
Also the cokernel of $(E,g)$ is $\alpha \epsilon(f)$.
Thus we get an exact sequence
\[
\begin{tikzcd}[column sep = 2.8em]
	0 \rar & \alpha \eta(f) \ar[r,"{\im(E,g)}"] & (E,N)_{\Lambda} \rar & \alpha \epsilon(f) \rar & 0.
\end{tikzcd}
\]

From \cite[Proposition 4.2]{FP2004} we know there is an exact sequence of functors 
$
\begin{tikzcd}[column sep = 1.5em]
	l  e \rar & \id_{\Gamma} \rar &  \iota q \rar & 0.
\end{tikzcd}
$
We obtain a commutative diagram with exact rows and columns:
\[
\begin{tikzcd}[column sep = 1.9em]
 & l  e  \alpha \eta(f) \ar[rr,"\phi"] \dar & & l  e(E,N)_{\Lambda} \dar & & 0 \dar & & \\
0 \rar & \alpha  \eta(f) \ar[rr,"{\im(E,g)}"] \dar & & (E,N)_{\Lambda} \ar[rr] \dar & & \alpha \epsilon(f) \dar \rar & 0 \\
 & \iota F(f)  \ar[rr,"{\iota q (\im(E,g))}"] \dar & & \iota q(E,N)_{\Lambda} \ar[rr] \dar & & \iota G(f) \rar \dar & 0 \\
 & 0 & & 0 & & 0 & 
\end{tikzcd}
\]
Since $e$ is exact and $e \alpha \epsilon = 0$ the map $\phi$ is an isomorphism.
But then we can extend the top row to a short exact sequence and apply the snake lemma to see that $\iota q(\im(E,g))$ is a monomorphism.

Since $\iota$ is fully faithful and exact this implies we have the following exact sequence in $\stGamma \bil \fdmod$:
\[
\begin{tikzcd}
	0 \rar & F(f) \rar & q((E,N)_{\Lambda}) \rar & G(f) \rar & 0.
\end{tikzcd}
\]
Now $\iota$ preserves epimorphisms and $q$ is its left adjoint, thus $q$ preserves projective objects.
We know $(E,N)_{\Lambda}$ is a projective $\Gamma$-module and hence $q((E,N)_{\Lambda})$ is projective, this shows $\pi F(f) \cong \Omega \pi G(f)$ in $\stGamma \bil \stmod$.
\end{proof}

\begin{rem}
By Proposition \ref{prp:frobenius} $\cS(\Lambda)$ is also a Frobenius category, so the stable category $\stsubm{}{\Lambda}$
is a triangulated category.
Hence one might ask whether $F$ and $G$ induce a triangle functor from $\stsubm{}{\Lambda}$ to $\stGamma \bil \stmod$.
However all maps factoring through projective objects in $\cS(\Lambda)$ factor through both $\cU$ and $\cV$, thus any induced triangle functor would have to  factor through the abelian category $\stGamma \bil \fdmod$, which renders any such functor trivial.
\end{rem}


\section{Auslander Algebras of Nakayama Algebras}
\label{sec:nakayama}

A finite length module is said to be \emph{uniserial} if it has a unique composition series.
We say an algebra $A$ is uniserial, or a \emph{Nakayama algebra}, if all indecomposable $A$-modules have a unique composition series.
In this section we prove Theorem \ref{thm:uniserial}.

\uniserial*

The only if part is proven in Subsection \ref{subsec:only-if}, but before that we describe a quasi-hereditary structure with the properties from Theorem \ref{thm:uniserial} for the Auslander algebras of Nakayama algebras.
First, however, we consider the example of self-injective Nakayama algebras over an algebraically closed field explicitly, to get some picture of the situation.


\subsection{Self-injective Nakayama Algebras}
\label{subsec:nakayama}
The classification of Nakayama algebras over algebraically closed fields is well known, and can for example be found in \cite[V.3]{ASS2006}.
We recall the self-injective case in this subsection to get an explicit description of an example.
We are particularly interested in the self-injective Nakayama algebras, so we consider the structure of their Auslander algebras in explicit terms here.
Let $\tilde{\A}_m$ denote the quiver with vertices $\Z / m \Z$ and arrows $i \rightarrow i+1$ for all $i \in \Z/ m\Z$.
Write $k\tilde{\A}_m$ for the path algebra of this quiver and let $J(k \tilde{\A}_m )$ denote the ideal generated by the arrows.
For $m,N \in \N$ we define $A(m,N) \coloneqq k \tilde{\A}_m / J(k \tilde{\A}_m)^{N+1}$, these are precisely the basic connected self-injective Nakayama algebras.
Notice that $A(1,N) \cong k[x]/\langle x^{N+1} \rangle$, and hence the case studied in detail in \cite{RZ2014} is included.
We parametrize the simple $A(m,N)$ modules by the vertices $j \in \Z / m \Z$ of $\tilde{\A}_m$.
The category $A(m,N) \bil \fdmod$ has indecomposable objects $[i]_j$ for $j \in \Z / m \Z$ and $i = 1,...,N+1$, where $\soc([i]_j) = S(j)$ and $[i]_j$ has Loewy length $i$.

To get an idea of the general shape of the Auslander-Reiten quiver, take for example the Auslander-Reiten quiver of $A(4,3)$ in figure \ref{fig:A(4,3)}.
\begin{figure}
\[
\xymatrix@=1.6em{
 \cdots & \ar@{.}[l] [1]_1 \ar[dr] & & \ar@{.>}[ll] [1]_0 \ar[dr] & & \ar@{.>}[ll] [1]_3 \ar[dr] & & \ar@{.>}[ll] [1]_2 \ar[dr] & \ar@{.>}[l] \cdots \\
  {[2]_2} \ar[dr] \ar[ur] & & \ar@{.>}[ll] [2]_1 \ar[dr] \ar[ur] & & \ar@{.>}[ll] [2]_0 \ar[ur] \ar[dr] & & \ar@{.>}[ll] [2]_3 \ar[ur] \ar[dr] & & \ar@{.>}[ll] [2]_2 \\
 \cdots & \ar@{.}[l] [3]_2 \ar[ur] \ar[dr] & & \ar@{.>}[ll] [3]_1 \ar[ur] \ar[dr] & & \ar@{.>}[ll] [3]_0 \ar[ur] \ar[dr] & & \ar@{.>}[ll] [3]_3 \ar[dr] \ar[ur] & \ar@{.>}[l] \cdots\\
{[4]_3} \ar[ur] & & {[4]_2} \ar[ur] & & {[4]_1} \ar[ur] & & {[4]_0} \ar[ur] & & {[4]_3}
}
\]
\caption{Auslander-Reiten quiver of $A(4,3)$}
\label{fig:A(4,3)}
\end{figure}
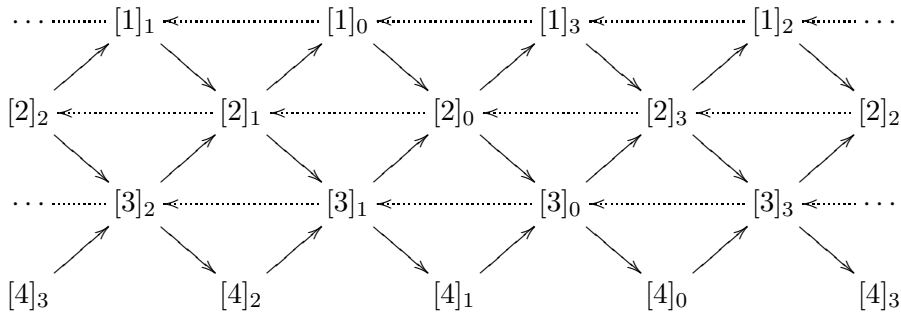
The Gabriel quiver of $\Gamma$ is the opposite of this quiver.

We may consider $A(m,N) \bil \fdmod$ as a $\Z/m\Z$-fold cover of $A(1,N) \bil \fdmod$. 
Namely, if we give $A(1,N) = k[x]/\langle x^{N+1} \rangle$ the $\Z$-grading given by monomial degrees, it induces a $\Z/m\Z$ grading and the categories $k[x] / \langle x^{N+1} \rangle \bil \fdmod^{\Z/m\Z}$ and $A(m,N) \bil \fdmod$ are isomorphic.  


\subsection{Quasi-hereditary Algebras}
\label{subsec:QH}

We follow the approach of \cite{DR1992} to quasi-hereditary algebras.
Let $A$ be a basic finite-dimensional algebra and let $\Xi$ be a set parametrizing the isomorphism classes of simple objects in $A \bil \fdmod$. 
We write $S(\xi)$ for the simple $A$-module corresponding to $\xi \in \Xi$, $P(\xi)$ for the projective cover of $S(\xi)$, and $I(\xi)$ for the injective envelope of $S(\xi)$.
Give $\Xi$ a partial ordering $(\Xi,\leq)$. For each $\xi \in \Xi$ the \emph{standard} module $\Delta(\xi)$ is the maximal factor module of $P(\xi)$ such that for all composition factors $S(\rho)$ of $\Delta(\xi)$, we have $\rho \leq \xi$.
Dually the costandard module $\nabla(\xi)$ is the maximal submodule of $I(\xi)$ such that for all composition factors $S(\rho)$, we have $\rho \leq \xi$.
A partial ordering on the simple $A$-modules is called \emph{adapted} if for every $A$-module $X$ with $\Top(X) = S(\xi)$ and $\soc(X) = S(\xi')$, such that $\xi$ and $\xi'$ are incomparable, there is $\rho \in \Xi$ such that $\rho > \xi$ and $S(\rho)$ is in the composition series of $X$. If the partial ordering is adapted, then all the standard and costandard modules are the same for any refinement of the partial ordering cf. \cite[Section 1]{DR1992}.

The full subcategory of standard (resp. costandard) modules in $A\bil \fdmod$ will be denoted by $\Delta$ (resp. $\nabla$), and $\cF(\Delta)$ (resp. $\cF(\nabla)$) denotes the full subcategory of all objects that have a filtration by standard (resp. costandard) modules. 
If the standard modules are Schurian and $A \in \cF(\Delta)$, we say $A$ is \emph{quasi-hereditary}.
The \emph{characteristic tilting module} of a quasi-hereditary algebra is determined up to isomorphism as the basic module $C$ such that $\cF(\Delta) \cap \cF(\nabla) = \add (C)$. By \cite[Proposition 3.1]{DR1992} it is indeed a tilting module.

Recall that a \emph{left strongly} quasi-hereditary algebra is a quasi-hereditary algebra such that all standard modules have projective dimension at most 1, or equivalently that $C$ has projective dimension at most 1.
Ringel has shown that any Auslander algebra of a representation-finite algebra has a left strongly quasi-hereditary structure cf. \cite[Section 5]{ringel2010}. 

The isomorphism classes of simple $\Gamma$-modules are in a canonical bijection with the isomorphism classes of $\ind(\Lambda)$, denoted by $\ind(\Lambda)/ \!\! \sim$.
For any $M \in \ind(\Lambda)$, let $p_M \colon E \rightarrow M$ be the projection, and $i_M \colon M \rightarrow E$ be the inclusion. 
Then $M$ corresponds to the idempotent $(i_M p_M)^{\op} \in \Gamma$, which corresponds to an isomorphism class of simple $\Gamma$-modules.
We use this bijection to parametrize the simple $\Gamma$-modules.
For $M \in \ind(\Lambda)$ we let $[M]$ denote its isomorphism class in $\ind(\Lambda)/ \!\! \sim$, although we write $S(M),P(M),I(M),\Delta(M),\nabla(M)$ resp. instead of $S([M]),P([M]),I([M]),$ $\Delta([M]),\nabla([M])$ resp.


\subsection{Auslander Algebras of Nakayama Algebras}
\label{subsec:QH-Naka}
Let $\Lambda$ be a Nakayama algebra and let $\Gamma$ be its Auslander algebra.
Let $\ell(M)$ denote the Loewy length of a $\Lambda$-module $M$.
We consider a partial ordering on $\ind(\Lambda)/ \!\! \sim$ given by the Loewy length:
For $M,N \in \ind(\Lambda)$, say $[M] > [N]$ if $\ell(M) < \ell(N)$, but $[M],[N]$ are incomparable if $\ell(M) = \ell(N)$.
\begin{rem}
Notice that modules with greater Loewy-length are smaller in our partial ordering.
Thus the simple modules are maximal.
\end{rem}

Let $X$ be an indecomposable $\Gamma$-module with $\Top(X) = S(M)$ and $\soc(X)=S(N)$.
If $M \ncong N$ and $\ell(M) = \ell(N)$ there is a non-trivial map $f \colon N \rightarrow M$ such that for every indecomposable summand $M'$ of $\im(f)$, $S(M')$ is in the composition series of $X$. 
We have $\ell(M)> \ell(M')$ for every such summand $M'$ of $\im(f)$, i.e. $[M'] > [M]$. Thus our partial order is adapted.

Let $M \in \ind(\Lambda)$. 
If $M$ is simple then there are no non-trivial homomorphisms from other simple $\Lambda$ modules to $M$.
Hence $\Hom_{\Gamma}(P(N),P(M)) = 0$ for all simple $N \not \cong M$, which implies $\Delta(M) = P(M)$.
Similarly we have $\Hom_{\Gamma}(I(M),I(N)) = 0$ for all simple $N \not \cong M$, and thus $\nabla(M) \cong I(M)$.

Now $\Lambda$ is uniserial, so if $M$ is not simple it has a unique maximal proper submodule $M'$, and clearly $\ell(M') = \ell(M) - 1$.
Recall that $P(M) \cong (E,M)_{\Lambda}$.
Let $N \in \ind(\Lambda)$, for any map in $(N,M')_{\Lambda}$, composition with the inclusion of $M'$ in $M$ gives a map in $(N,M)_{\Lambda}$. 
In this way $P(M')$ embeds in $P(M)$ as a $\Gamma$-submodule.
Any non-surjective map to $M$ factors through $M'$, in particular, if $N \in \ind(\Lambda)$ with $\ell(N) < \ell(M)$, then any map in $(N,M)_{\Lambda}$ factors through $M'$.
This shows that $\Delta(M) \cong P(M)/P(M')$ and thus $\Delta(M)$ has projective dimension 1. 
Consequently all modules in $\cF(\Delta)$ have projective dimension at most 1.

We proceed in a similar way for the costandard modules.
If $M$ is non-simple, then it has a unique maximal proper factor module $M''$. 
The projection $M \rightarrow M''$ induces an epimorphism $I(M) \rightarrow I(M'')$. 
We know any non-injective map from $M$ to $N$ factors through $M''$.
In particular, if $\ell(N) < \ell(M)$ and $N \in \ind(\Lambda)$, then any map in $(M,N)_{\Lambda}$ factors through the projection $M \rightarrow M''$. 
Thus the kernel of the map $I(M) \rightarrow I(M'')$ has no composition factors $S(N)$ such that $\ell(N) < \ell(M)$. 
Together this implies that we have an exact sequence
\[
\begin{tikzcd}
0 \ar[r] & \nabla(M) \ar[r] & I(M) \ar[r] & I(M'') \ar[r] & 0.
\end{tikzcd}
\]
In particular $\nabla(M)$ has injective dimension 1 and thus any module in $\cF(\nabla)$ has injective dimension at most 1.
Taking everything together we get the following proposition.

\begin{prp}
\label{prp:coincide}
Let $\Gamma$ be the Auslander algebra of a Nakayama algebra. The partial ordering above gives $\Gamma$ a quasi-hereditary structure with $\cF(\Delta) = \Gamma \bil \tol$ and $\cF(\nabla) = \Gamma \bil \divbl$.
\end{prp}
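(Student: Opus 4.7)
The plan is to combine the formulas for $\Delta(M)$ and $\nabla(M)$ just derived with three further ingredients: Schurianness of the standards, a $\Delta$-filtration of $\Gamma$, and a closure property of $\cF(\Delta)$ that upgrades the inclusion $\cF(\Delta) \subseteq \Gamma \bil \tol$ shown in the text above to an equality (and similarly for $\cF(\nabla)$).

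First I would verify quasi-heredity. The short exact sequence $0 \to P(M') \to P(M) \to \Delta(M) \to 0$ together with projectivity of $P(M)$ identifies $\End_{\Gamma}(\Delta(M))$ with the quotient of $\End_{\Gamma}(P(M)) \cong \End_{\Lambda}(M)^{\op}$ by the ideal of endomorphisms factoring through $P(M')$; under the equivalence $(E,-)_{\Lambda}$ this ideal corresponds to endomorphisms of $M$ with image contained in $M'$, which since $M$ is uniserial are exactly the non-surjective endomorphisms, i.e.\ the radical of $\End_{\Lambda}(M)$. Hence $\End_{\Gamma}(\Delta(M))$ is a division ring and $\Delta(M)$ is Schurian. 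For $\Gamma \in \cF(\Delta)$ I would induct on $\ell(M)$: if $\ell(M) = 1$ then $P(M) = \Delta(M)$; if $\ell(M) > 1$ then $\ell(M') = \ell(M) - 1$, so $P(M') \in \cF(\Delta)$ by induction, and the displayed sequence extends this $\Delta$-filtration to one of $P(M)$ with top $\Delta(M)$ and lower factors $\Delta(\rho)$ with $[\rho] > [M]$.

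Next I would invoke Ringel's theorem that for a left strongly quasi-hereditary algebra the subcategory $\cF(\Delta)$ is closed under submodules, cf.\ \cite[Section 5]{ringel2010}. Since every standard has projective dimension at most $1$ by the preceding discussion, our quasi-hereditary structure is left strongly quasi-hereditary. Every projective lies in $\cF(\Delta)$ by the previous step, and Proposition \ref{prp:torsionless} identifies $\Gamma \bil \tol$ with the submodules of projective $\Gamma$-modules, yielding $\Gamma \bil \tol \subseteq \cF(\Delta)$ and hence the first equality. The equality $\cF(\nabla) = \Gamma \bil \divbl$ follows dually: $D\Gamma \in \cF(\nabla)$ by induction on $\ell(M)$ using the sequence $0 \to \nabla(M) \to I(M) \to I(M'') \to 0$, and applying Ringel's theorem to $\Gamma^{\op}$ (which is left strongly quasi-hereditary because all $\nabla$'s for $\Gamma$ have injective dimension at most $1$) shows that $\cF(\nabla)$ is closed under factor modules; together with Proposition \ref{prp:divisable} this gives $\Gamma \bil \divbl \subseteq \cF(\nabla)$.

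The main obstacle will be a clean dualization of Ringel's submodule-closure result to a factor-module closure of $\cF(\nabla)$ via passage to the opposite algebra; once this is settled, the two equalities reduce to the explicit uniserial structure of $\Delta(M)$ and $\nabla(M)$ already spelled out above.
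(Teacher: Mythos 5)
There is a genuine gap at the decisive step. The ``theorem'' you invoke --- that for a left strongly quasi-hereditary algebra $\cF(\Delta)$ is closed under submodules --- is not in \cite{ringel2010} and is false in general. By \cite[Lemma 4.1]{DR1992} and its dual, closure of $\cF(\Delta)$ under submodules is equivalent to \emph{all costandard modules having injective dimension at most $1$}, not to all standard modules having projective dimension at most $1$; the latter condition is instead equivalent to $\cF(\nabla)$ being closed under factor modules. The homological reason: for $Y \subseteq X$ with $X \in \cF(\Delta)$ one needs $\ext^1_{\Gamma}(Y,\nabla(\lambda))=0$, and the exact sequence $\ext^1(X,\nabla(\lambda)) \to \ext^1(Y,\nabla(\lambda)) \to \ext^2(X/Y,\nabla(\lambda))$ kills the middle term only if $\nabla(\lambda)$ has injective dimension $\leq 1$; the projective dimension of the standards is irrelevant here. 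A concrete counterexample is the quiver $1\to 2\to 3$ with the composite relation and the order making $1$ maximal: all standards are projective, yet $S(2)=\rad P(1)$ is not $\Delta$-filtered. Worse, your principle contradicts the paper internally: Ringel's result makes \emph{every} Auslander algebra of a representation-finite algebra left strongly quasi-hereditary, so your argument would give $\cF(\Delta)=\Gamma\bil\tol$ for all such algebras, contradicting Theorem \ref{thm:uniserial}, which says this forces $\Lambda$ to be uniserial.

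The good news is that the repair is immediate from ingredients you already have: the preceding discussion produces the exact sequence $0 \to \nabla(M) \to I(M) \to I(M'') \to 0$, so every $\nabla(M)$ has injective dimension at most $1$, and then \cite[Lemma 4.1*]{DR1992} (together with Proposition \ref{prp:torsionless} and the fact that projectives lie in $\cF(\Delta)$) yields $\Gamma\bil\tol \subseteq \cF(\Delta)$ --- this is exactly the paper's route. Dually, the correct hypothesis for $\cF(\nabla)$ being closed under quotients is $\mathrm{pd}\,\Delta(M)\leq 1$ via \cite[Lemma 4.1]{DR1992}; in your write-up the two hypotheses are crossed in both halves. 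Your verification of Schurianness and of $\Gamma \in \cF(\Delta)$ by induction on Loewy length, and the inclusion $\cF(\Delta)\subseteq \Gamma\bil\tol$ from $\mathrm{pd}\,\Delta(M)\leq 1$, are all fine and slightly more detailed than the paper's.
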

\begin{proof}
The $\Gamma$-module $_\Gamma\Gamma$ is in $\cF(\Delta)$ and all the standard modules are Schurian.
Thus our partial ordering gives a quasi-hereditary structure on $\Gamma$.
Since all objects of $\cF(\Delta)$ have projective dimension at most 1 we see $\cF(\Delta) \subset \Gamma \bil \tol$.
Also all the costandard modules have injective dimension at most 1, so by \cite[Lemma 4.1*]{DR1992} and Proposition \ref{prp:torsionless} we have $\Gamma \bil \tol \subset \cF(\Delta)$.

We show $\cF(\nabla) = \Gamma \bil \divbl$ dually using \cite[Lemma 4.1]{DR1992} and Proposition \ref{prp:divisable}.
\end{proof}


\subsection{Other Representation-finite Algebras}
\label{subsec:only-if}

Quasi-hereditary structures on Auslander algebras of representation-finite algebras that have the property given in Proposition \ref{prp:coincide} are rare in general.
Indeed, the examples illustrated in Subsection \ref{subsec:QH-Naka} are the only cases.

\begin{prp}
\label{prp:uniserial}
Let $\Lambda$ be a basic representation-finite algebra and let $\Gamma$ be its Auslander algebra.
If $\Gamma$ has a quasi-hereditary structure such that the $\Delta$-filtered modules coincide with $\Gamma \bil \tol$,
then $\Lambda$ is uniserial.
\end{prp}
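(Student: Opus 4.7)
I would argue the contrapositive: assume $\Gamma$ carries a quasi-hereditary structure with $\cF(\Delta) = \Gamma \bil \tol$, and deduce that $\Lambda$ must be uniserial.

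First, set up a dictionary between standard modules and monomorphisms. For each $M \in \ind(\Lambda)$, the hypothesis gives $\Delta(M) \in \Gamma \bil \tol$, so by Corollary \ref{cor:subm} we can write $\Delta(M) \cong \alpha(f_M)$ for some monomorphism $f_M \colon N_M \hookrightarrow M$ in $\Lambda \bil \fdmod$, chosen minimally so that
\[
0 \to (E, N_M)_{\Lambda} \to (E, M)_{\Lambda} \to \Delta(M) \to 0
\]
is the minimal projective resolution of $\Delta(M)$. Since the kernel of $P_\Gamma(M) \twoheadrightarrow \Delta(M)$ is projective and, by standard quasi-hereditary theory, filtered by $\Delta(L)$'s with $L > M$, the indecomposable summands $L_i$ of $N_M = \bigoplus_i L_i$ all satisfy $L_i > M$ in the partial order. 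Via the equivalence $\fun(\Lambda) \simeq \Gamma \bil \fdmod$, the module $\alpha(f_M)$ corresponds to the functor $L \mapsto \Hom_\Lambda(L,M)/\Hom_\Lambda(L,N_M)$, so
\[
[\Delta(M):S(L)] = \dim_k \Hom_\Lambda(L, M)/\Hom_\Lambda(L, N_M).
\]
Hence the defining composition factor condition $[\Delta(M):S(\rho)] = 0$ for $\rho \not\leq M$ becomes: every morphism $L \to M$ with $L \not\leq M$ factors through $N_M$, and the Schurian condition $[\Delta(M):S(M)] = 1$ pins $N_M$ down to be the trace in $M$ of indecomposables strictly above $M$.

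Second, and this is the crux, I would show that $M/N_M$ must be a simple $\Lambda$-module for every non-maximal $M$. A priori the $\Gamma$-top of $\alpha(f_M)$ is automatically $S(M)$ once $N_M \subsetneq M$, so simplicity of the $\Lambda$-cokernel does not follow directly; it must come from the full force of the equality $\cF(\Delta) = \Gamma \bil \tol$ together with counting. I would exploit the dual $\nabla$-picture: Proposition \ref{prp:divisable} provides a description of $\Gamma \bil \divbl$ exactly dual to Corollary \ref{cor:subm}, and the assumption combined with $\Delta$--$\nabla$ duality forces $\cF(\nabla) \subseteq \Gamma \bil \divbl$, giving $\nabla(M)$ as the image under the dual functor of an epimorphism $M \twoheadrightarrow M/N^M$ in $\Lambda \bil \fdmod$. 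Applying this to the characteristic tilting module $C$, whose indecomposable summands lie in $\cF(\Delta) \cap \cF(\nabla) = \Gamma\bil\tol \cap \Gamma\bil\divbl$ and are in bijection with $\ind(\Lambda)/\!\!\sim$, the two descriptions of each summand (via $f$ and via its dual epimorphism) must be compatible; this compatibility combined with the Schurian condition forces $M/N_M$ to be indecomposable with simple top equal to its simple socle, i.e.\ simple.

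Third, once $M/N_M$ is simple the submodule $N_M$ is a maximal $\Lambda$-submodule of $M$, and it is canonically determined by the order. Iterating this for the indecomposable summands of $N_M$ produces a canonical descending chain $M \supsetneq N_M \supsetneq N_{N_M} \supsetneq \ldots$ with each successive quotient simple. Running the same argument on $\Lambda^{\op}$ (using $\aus(\Lambda^{\op}) \cong \Gamma^{\op}$ and transporting the quasi-hereditary structure by $k$-duality, which exchanges $\Gamma \bil \tol$ and $\Gamma^{\op}\bil\divbl$) yields a canonical ascending chain of submodules with simple subquotients in every indecomposable $\Lambda^{\op}$-module. Together these force every indecomposable $\Lambda$-module to admit a unique composition series, so $\Lambda$ is uniserial.

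The main obstacle is the second step: rigorously pinning down that $M/N_M$ is simple. The technical heart of the argument is the interplay between the projective resolution supplied by Corollary \ref{cor:subm} and the injective copresentation supplied by Proposition \ref{prp:divisable}, mediated by the characteristic tilting module; converting the category-level equality $\cF(\Delta) = \Gamma \bil \tol$ into a module-level simplicity assertion is where the work lies, and I expect it will require a careful comparison of the two descriptions of each indecomposable summand of the tilting module and a dimension count matching $\#\ind(\Lambda)/\!\!\sim$ with the number of simples of $\Gamma$.
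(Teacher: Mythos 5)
Your proposal correctly sets up the first step (each $\Delta(M)$ is $\alpha(f_M)$ for a monomorphism $f_M\colon N_M\to M$, with the summands of $N_M$ strictly above $M$), but the two steps that are supposed to carry the proof both have genuine gaps. For the second step you openly defer the crux: you do not actually prove that $M/N_M$ is simple, only sketch that some ``compatibility'' between the $\Delta$- and $\nabla$-descriptions of the characteristic tilting module plus a count of indecomposables should force it. I do not see how that plan closes; the tilting module only records information about the modules in $\cF(\Delta)\cap\cF(\nabla)$, one per simple, and nothing there controls the submodule lattice of an individual $M\in\ind(\Lambda)$. The hypothesis $\cF(\Delta)=\Gamma\bil\tol$ has to be applied to far more objects than the standard modules and the tilting summands: the paper's proof applies it to $\alpha(\iota)$ for \emph{every} proper submodule $\iota\colon M'\hookrightarrow M$. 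Since $\iota$ is a monomorphism, $\alpha(\iota)$ lies in $\Gamma\bil\tol$, hence is $\Delta$-filtered, hence (being a quotient of $P(M)$ with top $S(M)$) surjects onto $\Delta(M)$; lifting that surjection along the two projective presentations produces $g\colon M'\to N_M$ with $\iota=f_M g$. This shows every proper submodule of $M$ is contained in $N_M$ --- strictly stronger than ``$M/N_M$ is simple'' and obtained without the $\nabla$-side or the tilting module.

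The third step fails even if one grants that $M/N_M$ is simple. Knowing that $N_M$ is \emph{a} maximal submodule and iterating produces a composition series $M\supsetneq N_M\supsetneq N_{N_M}\supsetneq\cdots$, but every finite-length module has a composition series; uniseriality requires that \emph{every} submodule occur in this chain, i.e.\ that each term contain all proper submodules of its predecessor, which is exactly the stronger statement above and does not follow from simplicity of the quotients. Likewise, pairing a canonical descending chain with a canonical ascending chain (from $\Lambda^{\op}$) does not force the submodule lattice to be a chain. There is also a well-definedness issue: $N_M$ may be decomposable, so ``iterating on $N_M$'' does not by itself produce a chain of submodules of $M$. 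The paper sidesteps both problems by running the argument only for $M$ a submodule of an indecomposable injective (so $M$, $N_M$ and all iterates have simple socle and are indecomposable, and the containment statement makes $N_M$ the unique maximal submodule), dually for quotients of indecomposable projectives via $\cF(\nabla)=\Gamma\bil\divbl$, and then invoking the characterization of Nakayama algebras by uniseriality of the indecomposable projectives and injectives. To repair your proof you should replace step two by the factorization argument for arbitrary submodules and restructure step three around it.
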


\begin{proof}
We keep the notation from Subsection \ref{subsec:QH}. 
Let $\Gamma$ have a quasi-hereditary structure such that $\cF(\Delta) = \Gamma \bil \tol$.
It suffices to show that all indecomposable projective and all indecomposable injective $\Lambda$-modules have a unique composition series.
Let $M \in \ind(\Lambda)$ be a submodule of an indecomposable injective module. 
Let $\Delta(M)$ be the standard module generated by $(E,M)_{\Lambda}$.
By assumption we have a projective resolution
\[
\begin{tikzcd}
0 \rar & P_1 \ar[r,"\pi_1"] & (E,M)_{\Lambda} \rar["\pi_0"] & \Delta(M) \rar & 0.
\end{tikzcd}
\]
Since $\Gamma \bil \proj$ is equivalent to $\Lambda \bil \fdmod$ we get a monomorphism $f \colon N \rightarrow M$ in $\Lambda \bil \fdmod$ such that $\pi_1 = (E,f)_{\Lambda}$.
Let $M'$ be any proper submodule of $M$ and let $\iota$ denote its inclusion. 
Then $\alpha(\iota)$ is $\Delta$-filtered, hence it has $\Delta(M)$ as a factor module.
Thus there is a commutative diagram
\[
\begin{tikzcd}
0 \rar & (E,M')_{\Lambda} \ar[r,"{(E,\iota)}"] \ar[d,dashed,"\psi"] & (E,M)_{\Lambda} \ar[r] \ar[xshift=0.3ex,d,-] \ar[xshift=-0.3ex,d,-] & \alpha(\iota) \ar[d,twoheadrightarrow] \rar & 0\\
0 \rar & (E,N)_{\Lambda} \ar[r,"\pi_1"] & (E,M)_{\Lambda} \ar[r,"\pi_0"] & \Delta(M) \rar & 0.
\end{tikzcd}
\]
We get an induced map $\psi \colon (E,M')_{\Lambda} \rightarrow (E,N)_{\Lambda}$ making the diagram above commutative.
This yields a monomorphism $g \colon M' \rightarrow N$ such that $\iota = f g$. 
If we identify $N$ with its image in $M$ via $f$ this shows every proper submodule $M'$ of $M$ is a submodule of $N$.
Since $N$ is a submodule of an indecomposable injective module it is also indecomposable.
This shows that any indecomposable injective $\Lambda$-module has a unique composition series.
Dually, using that $\Gamma \bil \divbl = \cF(\nabla)$ we can show all indecomposable projective $\Lambda$-modules have a unique composition series.
\end{proof}

Combining Proposition \ref{prp:uniserial} with Proposition \ref{prp:coincide} now yields Theorem \ref{thm:uniserial}.

Given a quasi-hereditary structure on $\Gamma$ the condition $\cF(\Delta) = \Gamma \bil \tol$ is the same as condition $(i)$ in \cite[Lemma 4.1]{DR1992} combined with condition $(iv)$ in \cite[Lemma 4.1*]{DR1992}.
Thus all the conditions of these two lemmas hold, in particular $\cF(\nabla) = \Gamma \bil \divbl$ and thus $\add(T) = \add(C)$. 
Since both $T$ and $C$ are basic this implies they are isomorphic. 
Conversely, if $T \cong C$, then $\cF(\Delta) = \cog(C) = \cog(T) = \Gamma \bil \tol$.
Hence Theorem \ref{thm:uniserial} yields the following corollary.

\begin{cor}
Let $\Lambda$ be a basic self-injective algebra of finite representation type and
let $\Gamma$ be the Auslander algebra of $\Lambda$. 
We let $T = c(E)$ be the canonical tilting and cotilting module as defined in Subsection \ref{subsec:recollement}.
Then $T$ is a characteristic tilting module of a quasi-hereditary structure on $\Gamma$ if and only if $\Lambda$ is a Nakayama algebra.
\end{cor}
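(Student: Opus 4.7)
The plan is to reduce the corollary to Theorem \ref{thm:uniserial} by showing that the hypothesis ``$T$ is the characteristic tilting module of some quasi-hereditary structure on $\Gamma$'' is equivalent to the combinatorial hypothesis appearing in Theorem \ref{thm:uniserial}, namely the existence of a quasi-hereditary structure on $\Gamma$ with $\cF(\Delta) = \Gamma \bil \tol$.

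\textbf{If direction.} Assume $\Lambda$ is a Nakayama algebra. I would invoke Proposition \ref{prp:coincide} to produce a quasi-hereditary structure on $\Gamma$ with $\cF(\Delta) = \Gamma \bil \tol$ and $\cF(\nabla) = \Gamma \bil \divbl$. By Lemma \ref{lem:cog=tol} one has $\cog(T) = \Gamma \bil \tol$ and $\gen(T) = \Gamma \bil \divbl$, so
\[
\add(C) \;=\; \cF(\Delta) \cap \cF(\nabla) \;=\; \cog(T) \cap \gen(T).
\]
Since $T$ has projective and injective dimension at most one, $T$ lies in $\Gamma \bil \tol \cap \Gamma \bil \divbl = \add(C)$. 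Both $T$ and $C$ are basic and have exactly $|\ind(\Lambda)|$ indecomposable summands (equal to the number of simple $\Gamma$-modules), so $T \cong C$.

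\textbf{Only if direction.} Suppose some quasi-hereditary structure on $\Gamma$ has $T$ as its characteristic tilting module, i.e.\ $T \cong C$. Since $\text{pd}(C) = \text{pd}(T) \leq 1$, the structure is left strongly quasi-hereditary, so every standard module, and hence every object of $\cF(\Delta)$, has projective dimension at most one; this yields $\cF(\Delta) \subseteq \Gamma \bil \tol$. For the reverse inclusion I would appeal to the cohomological characterisation of $\cF(\Delta)$ in \cite[Lemma 4.1]{DR1992} (in terms of $\ext^\bullet$-vanishing against $C$) together with the identification $\cog(T) = \Gamma \bil \tol$ from Lemma \ref{lem:cog=tol} and the fact that $T = C$ is cotilting, which jointly give $\Gamma \bil \tol \subseteq \cF(\Delta)$. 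With $\cF(\Delta) = \Gamma \bil \tol$ established, Theorem \ref{thm:uniserial} immediately forces $\Lambda$ to be uniserial, and since $\Lambda$ is already assumed self-injective, it is a Nakayama algebra.

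The main obstacle is the only-if direction: extracting the concrete equality $\cF(\Delta) = \Gamma \bil \tol$ from the abstract datum $T \cong C$. The inclusion $\cF(\Delta) \subseteq \Gamma \bil \tol$ is essentially formal from pd$(T) \leq 1$, but the reverse inclusion requires combining the $\ext$-vanishing criterion from \cite[Lemma 4.1]{DR1992} (and its dual \cite[Lemma 4.1$^*$]{DR1992} to simultaneously obtain $\cF(\nabla) = \Gamma \bil \divbl$) with the recollement-level description of $\Gamma \bil \tol$ and $\Gamma \bil \divbl$ as $\cog(T)$ and $\gen(T)$ from Lemma \ref{lem:cog=tol}. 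The if direction, by contrast, is bookkeeping once Proposition \ref{prp:coincide} is in hand.
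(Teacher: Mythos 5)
Your proposal is correct and follows essentially the same route as the paper: both reduce the corollary to Theorem \ref{thm:uniserial} by showing that $T\cong C$ for some quasi-hereditary structure if and only if some quasi-hereditary structure satisfies $\cF(\Delta)=\Gamma\bil\tol$, using Lemma \ref{lem:cog=tol} ($\cog(T)=\Gamma\bil\tol$, $\gen(T)=\Gamma\bil\divbl$, so $\add(T)=\cog(T)\cap\gen(T)$) together with \cite[Lemmas 4.1 and 4.1*]{DR1992}. The paper is terser in the ``only if'' direction (it simply writes $\cF(\Delta)=\cog(C)=\cog(T)=\Gamma\bil\tol$), whereas you split this into the two inclusions coming from $\mathrm{pd}(C)\leq 1$ and $\mathrm{id}(C)\leq 1$; this is the same argument spelled out slightly more.
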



\section*{Acknowledgements}
I want to thank Julia Sauter,  my de facto advisor, who suggested this topic 
to me, and provided advice and ideas throughout the writing of this article.
I would also like to thank Claus M. Ringel for enlightening discussions and Henning Krause for useful comments.
Finally I thank the referee for reading the article carefully and suggesting several improvements.



\end{document}